\documentclass{amsart}

\usepackage{amsfonts}
\usepackage{color}
\usepackage{graphicx}
\usepackage[bookmarksnumbered,colorlinks, linkcolor=blue, citecolor=red, pagebackref, bookmarks, breaklinks]{hyperref}

\newtheorem{thm}{Theorem}[section]
\newtheorem{cor}[thm]{Corollary}
\newtheorem{lema}[thm]{Lemma}
\newtheorem{prop}[thm]{Proposition}
\theoremstyle{definition}

\theoremstyle{remark}

\newtheorem{rem}[thm]{Remark}
\numberwithin{equation}{section}
\newcommand{\R}{\mathbb R}
\newcommand{\N}{\mathbb N}

\newcommand{\Sn}{\mathbb{S}^{n-1}}

\newcommand{\A}{\mathcal{A}}

\newcommand{\LL}{\mathcal{L}}

\newcommand{\Hn}{\mathcal{H}^{n-1}}

\newcommand{\ve}{\varepsilon}

\def\diver{\mathop{\text{\normalfont div}}}

\begin{document}

\title{Asymptotic behavior for anisotropic fractional energies}
	
\author[J. Fern\'andez Bonder]{Julian Fern\'andez Bonder}

\address[J. Fern\'andez Bonder and A, Salort]{Instituto de C\'alculo (UBA - CONICET) and \hfill\break\indent Departamento  de Matem\'atica, FCEyN, Universidad de Buenos Aires, \hfill\break\indent Pabell\'on I, Ciudad Universitaria (1428), Buenos Aires, Argentina.}

\email[J. Fern\'andez Bonder]{{\tt jfbonder@dm.uba.ar}}
\email[A. Salort]{{\tt asalort@dm.uba.ar}}

\author[A. Salort]{Ariel Salort}

\subjclass[2020]{35J92, 35R11, 35B27}


\keywords{Fractional energies, fractional order Sobolev spaces, homogenization}

\thanks{This workd was partially supported by UBACYT Prog. 2018 20020170100445BA and by ANPCyT PICT 2019-00985. J. Fern\'andez Bonder and A. Salort are members of CONICET}

\begin{abstract}
In this paper we investigate the asymptotic behavior of anisotropic fractional energies as the fractional parameter $s\in (0,1)$ approaches both $s\uparrow 1$ and $s\downarrow 0$ in the spirit of the celebrated papers of Bourgain-Brezis-Mironescu \cite{BBM} and Maz'ya-Shaposhnikova \cite{MS}.

Then, focusing con the case $s\uparrow 1$ we analyze the behavior of solutions to the corresponding minimization problems and finally, we also study the problem where a homogenization effect is combined with the localization phenomena that occurs when $s\uparrow 1$.
\end{abstract}

\maketitle

\section{Introduction}

The celebrated result by Bourgain, Brezis and Mironescu establishes the behavior of the so-called Gagliardo seminorm in fractional order Sobolev spaces of order $s$ as $s\uparrow 1$, providing new characterizations for functions in the Sobolev space $W^{1,p}(\Omega)$.  More precisely, given a smooth bounded domain $\Omega\subset\R^n$, $n\geq 1$ and $p\in [1,\infty)$, for any $u\in W^{1,p}(\Omega)$ in \cite{BBM} it is  proved that
$$
\lim_{s\uparrow 1} (1-s) \iint_{\Omega \times \Omega} \frac{|u(x)-u(y)|^p}{|x-y|^{n+sp}}\,dxdy = \mathcal{K}_{p,n} \|\nabla u\|_p^p
$$
where the constant $\mathcal{K}_{p,n}$ is given by
$$
 \mathcal{K}_{p,n} = \frac1p \int_{\Sn} |\omega_1|^p \, d\Hn.
$$
Here $\Sn$ is the unit sphere in $\R^n$ and $\Hn$ the Hausdorff $(n-1)-$dimensional measure.

The formula above was proved to hold  with less assumptions on the domain. In fact, in \cite{LS} it is established the validity of the BBM-formula for any open set $\Omega\subset \R^n$, and recently,  in \cite{DD},  for any bounded domain. This analysis was completed in \cite{D,P}, where it was proven that a similar formula holds for functions of bounded variation when $\Omega\subset\R^n$ is a bounded Lipschitz set.

Motivated with the results in \cite{BBM}, Maz'ya and Shaposhnikova complemented the study by analyzing the behavior of the seminorm as $s\downarrow 0$. In fact, the authors proved in \cite{MS} that for any $n\geq 1$ and $p\in [1,\infty)$
$$
\lim_{s\downarrow 0} s \iint_{\R^n\times \R^n} \frac{|u(x)-u(y)|^p}{|x-y|^{n+sp}}\,dxdy =  \mathcal{C}_{p,n}  \|u\|_p^p
$$
whenever $u\in D^{s,p}(\R^n)$ for some $s\in (0,1)$ where $D^{s,p}(\R^n)$ is the completion of $C^\infty_c(\R^n)$ with respect to the Gagliardo seminorm. The constant $\mathcal{C}_{p,n}$ is given by
$$
\mathcal{C}_{p,n} = \frac{4 \pi^\frac{n}{2}}{p\Gamma(\tfrac{n}{2})},
$$
where $\Gamma$ denotes the Gamma function.

\medskip

The singular limits mentioned above are natural and have a physical relevance in the framework of the theory of L\'evy processes. This has led to the fact that in the last years, a huge effort in trying to extend the asymptotic results as $s\uparrow  1$ and $s\downarrow 0$ proved in \cite{BBM, MS} to different contexts has been carried out. We mention just some examples: for the theory of fractional $s-$perimeters, the analysis of the asymptotic limits was addressed in \cite{ADM, DFPV}; the extension to functions allowing a behavior more general than a power was done in \cite{ACPS2, ACPS, CMSV, FBS} in the context of fractional Orlicz-Sobolev spaces; in the magnetic setting, the behavior of the corresponding seminorms was studied \cite{NPSV, PSV}; the extension of magnetic fractional Orlicz-Sobolev spaces was dealt in \cite{FBSmag, MSV}. 

\medskip

The purpose of this paper is to study the asymptotic behavior as $s\uparrow 1$ and $s\downarrow 0$ of anisotropic Gagliardo seminorms, that is, the quantity
$$
J_{m,s}(u):=\frac{1-s}{p}\iint_{\R^n\times\R^n} m(x,x-y) \frac{|u(x)-u(y)|^p}{|x-y|^{n+sp}}\, dxdy,
$$
where $m$ is a function bounded away from 0 and infinity satisfying some suitable conditions (see hypotheses \eqref{H1}--\eqref{H3} below).

Let us conclude this section by describing our main results. In Theorem \ref{teo.bbm} we prove that given $u\in L^p(\R^n)$ fixed,
$$
\lim_{s\uparrow 1} J_{m,s}(u) = \int_{\R^n} \A(x, \nabla u)\,dx
$$
where  
$$
\A(x,\xi)= \frac1p \int_{\Sn} a(x,w)|\xi\cdot w|^p  \, d\Hn
$$
and $a(x,\omega)$ is a {\em radial limit} of the weight function $m$ (see \eqref{H3}).

In Theorem \ref{main2} we also treat the case of a sequence, i.e., the behavior of $J_{m,s}(u_s)$ as $s\uparrow 1$, where $\{u_s\}_s$ is a sequence of functions in $L^p(\R^n)$ such that $(1-s)[u_s]_{s,p}^p + \|u_s\|_p^p$ is uniformly bounded.

When $u\in W^{s_0,p}(\R^n)$ for some $s_0\in (0,1)$, then in Theorem \ref{MS} we prove that 
$$
\lim_{s\downarrow 0} sJ_{m,s}(u) = \int_{\R^n} |u|^p b(x)\, dx,
$$
where $b(x)=\lim_{s\downarrow 0} b_s(x)$  a.e. $x\in\R^n$, and
$$
b_s(x) := 2s\int_{\Sn} \int_{2|x|}^\infty \frac{m(x,r\omega) }{r^{sp+1}} \,dr\,d\Hn.
$$

In the last part of the paper we analyze whether homogenization and localization processes can be interchanged. To be more precise, observe that the functional $J_{m,s}$ of a function $u_s\in W^{s,p}_0(\Omega)$, $\Omega\subset \R^n$, is related with weak solutions to
$$
\LL_{m,s} u_s = f \quad  \text{ in }\Omega
$$
where $f\in L^{p'}(\Omega)$ and $\LL_{m,s}$ is the Fr\'echet derivative of $J_{m.s}$, i.e., 
$$
\LL_{m,s}(u)= p.v. (1-s)\int_{\R^n} m(x,x-y)\frac{|u(x)-u(y)|^{p-2}(u(x)-u(y))}{|x-y|^{n+sp}}\,dy.
$$
In Section \ref{S6} we consider solutions of a family of kernels $m_\ve(x,x-y)$, $\ve>0$ having the form $m_\ve(x,x-y)=m(\tfrac{x}{\ve},x-y)$, where $m(x,x-y)$ fullfills the previous assumptions and it is further a $Q-$periodic function in the first variable, being $Q$ the unit cube in $\R^n$. Given a solution of $u_{s,\ve}\in W^{1,p}_0(\Omega)$ to
$\LL_{m_\ve,s} u_{\ve,s} = f$ in $\Omega$, in Proposition \ref{lim.ve.s} we prove that
$$
\lim_{\ve\downarrow 0} \left( \lim_{s\uparrow 1} u_{s,\ve} \right)= u^*,
$$
(in the $L^p(\Omega)$ sense) where $u^*\in W^{1,p}_0(\Omega)$ is the weak solution of
$$
-\diver(\nabla_\xi \A^*(\nabla u^*)) = f \quad  \text{in }\Omega \qquad \text{ with } \A^*(\xi)= \inf_{v\in W^{1,1}_{per}(Q)} \int_Q \A(y, \xi+\nabla v(y))\, dy.
$$

On the other hand, in Propisition \ref{lim.s.ve} we get that
$$
\lim_{s\uparrow 1} \left(  \lim_{\ve\downarrow 0}  u_{s,\ve} \right) = \bar u
$$
(in the $L^p(\Omega)$ sense), where $\bar u\in W^{1,p}_0(\Omega)$ is the solution to
$$
-\diver(\nabla_\xi \bar\A(\nabla \bar u))= f \quad  \text{in }\Omega, \qquad \text{ with } \bar \A(\xi) = \int_Q \A(y,\xi)\, dy.
$$
This shows that in general, homogenization and localization do not commute.

\subsection*{Organization of the paper}
After this introduction, in Section \ref{S2}, we collect some preliminaries, and establish some notation that will be used in the sequel.

In Section \ref{S3}, we analyze the problems for $s\uparrow 1$, the so-called BBM-type results in the spirit of Bourgains-Brezis-Mironescu's paper \cite{BBM}.

In Section \ref{S4}, we analyze the problem $s\downarrow 0$, the MS-type results, in the spirit of Maz'ya-Shaposhnikova's paper \cite{MS}.

In Section \ref{S5} we connect the BBM-type results of Section \ref{S3} with the asymptotic behavior of solutions to nonlocal problems and the transition to solutions to local ones.

Finally, in Section \ref{S6}, we investigate the interplay between localization (i.e. $s\uparrow 1$) and homogenization.

\section{Preliminaries}\label{S2}

\subsection{Fractional order Sobolev spaces}
Throughout this article we will use the standard Gagliardo definition of fractional order Sobolev spaces. That is:
Given a fractional parameter $s\in (0,1)$ and an integrable parameter $p\in [1,\infty)$, the fractional order Sobolev space, $W^{s,p}(\R^n)$ is defined as
$$
W^{s,p}(\R^n) := \left\{ u\in L^p(\R^n)\colon [u]_{s,p}^p <\infty\right\},
$$
where $[\, \cdot\, ]_{s,p}$ is the so-called Gagliardo seminorm that is defined as
$$
[u]_{s,p}^p:= \iint_{\R^n\times\R^n} \frac{|u(x)-u(x-h)|^p}{|h|^{n+sp}}\, dxdh.
$$

This space is endowed with the norm
$$
\|u\|_{s,p} = \left( \|u\|_p^p + [u]_{s,p}^p\right)^\frac1p
$$
and $(W^{s,p}(\R^n), \|\cdot\|_{s,p})$ is a separable Banach space, that is reflexive if $p>1$.

When considering domains $\Omega\subset\R^n$ we will use the notation $W^{s,p}_0(\Omega)$ to denote the set of functions in $W^{s,p}(\R^n)$ that vanishes outside $\Omega$, namely
$$
W^{s,p}_0(\Omega) := \{u\in W^{s,p}(\R^n)\colon u=0 \text{ a.e. in } \R^n\setminus \Omega\}.
$$

Observe that this space agrees with the closure of test functions in $\Omega$ if, for instance, $\Omega$ has Lipschitz boundary or if $s<\tfrac1p$.

For these spaces, the Rellich-Kondrashov compactness result holds true, i.e.
\begin{thm}\label{R-K}
Assume that $s\in (0,1)$ and $p\in [1,\infty)$ and let $\{u_k\}_{k\in\N}\subset W^{s,p}_0(\Omega)$ be a bounded sequence. Then, there exists $u\in W^{s,p}_0(\Omega)$ and a subsequence $\{u_{k_j}\}_{j\in\N}\subset \{u_k\}_{k\in\N}$ such that
$$
u_{k_j}\to u \quad \text{ in } L^p_\text{loc}(\Omega).
$$
If $\Omega$ is bounded, the convergence is in $L^p(\Omega)$.
\end{thm}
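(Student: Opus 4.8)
The plan is to deduce the result from the Riesz--Fr\'echet--Kolmogorov compactness criterion in $L^p$, whose key ingredient is a uniform control of the $L^p$ modulus of continuity in terms of the Gagliardo seminorm. The main step, therefore, is the \emph{translation estimate}: there exists $C=C(n,p)>0$ such that, writing $\rho=|h|$ and $B_\rho=B_\rho(0)$,
$$
\|u(\cdot+h)-u\|_{L^p(\R^n)}^p\le C|h|^{sp}[u]_{s,p}^p\qquad\text{for every }u\in W^{s,p}(\R^n)\text{ and every }h\in\R^n.
$$
To prove it I would fix $h$ and write, for a.e. $x\in\R^n$,
$$
u(x+h)-u(x)=\frac{1}{|B_\rho|}\int_{B_\rho(x)}\big(u(x+h)-u(z)\big)\,dz+\frac{1}{|B_\rho|}\int_{B_\rho(x)}\big(u(z)-u(x)\big)\,dz.
$$
Applying Jensen's inequality to each average, using $|z-x|\le\rho$ in the second integral and $|z-(x+h)|\le2\rho$ in the first to insert the weights $|z-x|^{-(n+sp)}$ and $|z-(x+h)|^{-(n+sp)}$ respectively, then integrating in $x$ and using Fubini (together with the translation invariance of Lebesgue measure for the first term) gives the estimate with an explicit constant. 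By density of $C_c^\infty(\R^n)$ it extends to all of $W^{s,p}(\R^n)$.

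Granting this, assume first that $\Omega$ is bounded, and let $\{u_k\}\subset W^{s,p}_0(\Omega)$ satisfy $\|u_k\|_p^p+[u_k]_{s,p}^p\le M$. Since each $u_k$ vanishes off $\Omega$, the family is supported in a fixed ball, hence tight in $L^p(\R^n)$; and the translation estimate yields $\sup_k\|u_k(\cdot+h)-u_k\|_p\le CM^{1/p}|h|^{s}\to0$ as $h\to0$. By Riesz--Fr\'echet--Kolmogorov the family $\{u_k\}$ is relatively compact in $L^p(\R^n)$, so a subsequence satisfies $u_{k_j}\to u$ in $L^p(\R^n)$. Extracting a further subsequence converging a.e., Fatou's lemma gives $[u]_{s,p}^p\le\liminf_j[u_{k_j}]_{s,p}^p\le M$, while the a.e. limit vanishes off $\Omega$; thus $u\in W^{s,p}_0(\Omega)$ and the convergence holds in $L^p(\Omega)$.

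For a general open $\Omega$ I would exhaust it by the bounded sets $\Omega\cap B_R$, $R\in\N$. Fixing $R$ and a cutoff $\eta\in C_c^\infty(B_{R+1})$ with $\eta\equiv1$ on $B_R$ and $0\le\eta\le1$, the sequence $\{\eta u_k\}$ is bounded in $L^p$, supported in the fixed ball $B_{R+1}$, and satisfies
$$
\|(\eta u_k)(\cdot+h)-\eta u_k\|_p\le\|u_k(\cdot+h)-u_k\|_p+\|\nabla\eta\|_\infty|h|\,\|u_k\|_p\longrightarrow0
$$
uniformly in $k$; so by the bounded case it is relatively compact in $L^p(\R^n)$. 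A diagonal argument over $R$ then yields a single subsequence converging in $L^p(B_R)$ for every $R$, hence in $L^p_{\mathrm{loc}}(\Omega)$ (every compact subset of $\Omega$ lies in some $B_R$), and the limit belongs to $W^{s,p}_0(\Omega)$ exactly as before.

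The only genuinely substantive point is the translation estimate; the remainder is a routine combination of the Riesz--Fr\'echet--Kolmogorov theorem, a diagonal extraction and Fatou's lemma. The one place requiring a little care is the unbounded case, where one must localize with a cutoff before invoking the compactness criterion, since tightness in $L^p(\R^n)$ is no longer available.
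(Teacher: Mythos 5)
Your argument is correct. Note, however, that the paper does not prove Theorem \ref{R-K} at all: it is stated as a known result and attributed to the literature (the standard reference \cite{DPV}, where the compactness theorem is proved by a direct total-boundedness argument, covering the domain by small cubes and comparing with piecewise-constant averages). Your route is the other classical one: the translation estimate $\|u(\cdot+h)-u\|_{L^p(\R^n)}^p\le C(n,p)\,|h|^{sp}[u]_{s,p}^p$, obtained by splitting $u(x+h)-u(x)$ through the average over $B_{|h|}(x)$ and applying Jensen and Fubini, followed by the Riesz--Fr\'echet--Kolmogorov criterion; this is valid for all $u\in W^{s,p}(\R^n)$ directly (the density step you mention is not even needed, since every step is just Jensen plus Fubini). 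What your write-up buys beyond the citation is a self-contained treatment of exactly the statement as formulated here, including the part that goes beyond the bare compactness assertion: the unbounded case, handled via the cutoff estimate $\|(\eta u_k)(\cdot+h)-\eta u_k\|_p\le\|u_k(\cdot+h)-u_k\|_p+\|\nabla\eta\|_\infty|h|\|u_k\|_p$, a diagonal extraction over the exhaustion $\Omega\cap B_R$ to get $L^p_{\mathrm{loc}}$ convergence, and the Fatou argument showing the limit stays in $W^{s,p}_0(\Omega)$ (it vanishes a.e.\ outside $\Omega$ and its seminorm is bounded by the $\liminf$). All of these steps check out; the only cosmetic remark is that the translation estimate is useful only for small $|h|$, which is all the Kolmogorov criterion requires.
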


All of the above mentioned results are well known and can be found, for instance, in \cite{DPV}.

\subsection{Some notation}

In several places of the paper, the following notation will be used:
\begin{itemize}
\item The unit sphere in $\R^n$ will be denoted by $\Sn$.
\item The $(n-1)-$dimensional Hausdorff measure will be denoted by $\Hn$.
\item The volume of the unit ball in $\R^n$ will be denoted by $\omega_n$.
\item The volume of the $(n-1)-$dimensional unit sphere in $\R^n$ is then $n\omega_n$.
\end{itemize}

\subsection{Anisotropic fractional energies}
We consider a kernel function $m=m(x,h)$, $m\in L^\infty(\R^n\times\R^n)$  and for each function $m$ and each fractional parameter $s\in (0,1)$ we define the functional $J_{m,s}\colon W^{s,p}_0(\Omega)\to \R$,
$$
J_{m,s}(u) := \frac{(1-s)}{p}\iint_{\R^n\times\R^n} m(x,h) \frac{|u(x)-u(x-h)|^p}{|h|^{n+sp}}\, dxdh,
$$
where $\Omega$ is a domain in $\R^n$, not necessarily bounded.

If the kernel function $m$ is bounded below away from 0, the functional $J_{m,s}$ is coercive, so we impose the following condition on $m$:
\begin{equation}\label{H1}\tag{$H_1$}
m_- \le m(x,h)\le m_+
\end{equation}
for some $0<m_-\le  m_+<\infty$.

It is easy to see that $J_{m,s}$ is Fr\'echet differentiable. If we try to obtain an integral representation of the derivative $J_{m,s}'(u)\in W^{-s,p'}(\Omega)$, we need to impose some symmetry assumptions on the kernel $m$, namely,
\begin{equation}\label{H2}\tag{$H_2$}
m(x,h)=m(x-h,-h).
\end{equation}
Under this condition, it is easy to see (see for instance \cite{FBRS}), that the derivative $J_{m,s}'(u)$ has the following integral representation,
\begin{equation}\label{Lm}
\begin{split}
J'_{m,s}(u) := &\LL_{m,s}(u)\\
= &p.v. (1-s) \int_{\R^n} m(x,h)\frac{|u(x)-u(x-h)|^{p-2}(u(x)-u(x-h))}{|h|^{n+sp}}\,dh,
\end{split}
\end{equation}
where $p.v.$ stands for {\em in principal value}.

Observe that hypotheses \eqref{H2} is by no means restrictive, since denoting
$$
m_\text{sym}(x,h) = \frac{m(x,h) + m(x-h,-h)}{2}
$$
we have that $m_\text{sym}$ satisfies \eqref{H2} and 
$$
J_{m,s} = J_{m_\text{sym},s}.
$$

In order to analyze the case where $s\uparrow 1$ in our functionals $J_{m,s}$ we need to assume some asymptotic behavior on the kernel $m$. This condition, though it seems quite technical right now it will become apparent later on:

There exists a function $a:\R^n\times \Sn\to \R$ such that
\begin{equation}\label{H3}\tag{$H_3$}
m(x,r\omega) = a(x,\omega) + O(r)
\end{equation}
uniformly in $\omega\in \Sn$.

This condition is saying that $m$ has some singular behavior on the diagonal that is determined by the angle in which one approaches the origin.

Observe that $m\in C^1$ implies that the limit function $a$ in \eqref{H3} is independent of $\omega$. In fact, $a(x,\omega)=m(x,0)$ in this case and a typical nontrivial example to keep in mind is the following 
$$
m(x,h) = \left|M(x,h) \frac{h}{|h|}\right|^\alpha,\ \alpha\neq 0
$$
where $M(x,h)\in \R^{n\times n}$ is a symmetric uniformly elliptic matrix with the structural hypothesis
$$
M(x,h)=M(x-h,-h).
$$
In this case, the function $a(x,\omega)$ is given by
$$
a(x,\omega) = \left|M(x,0)\omega\right|^\alpha.
$$

\section{Limit as $s\uparrow 1$ of $J_{m,s}$}\label{S3}
The purpose of this section is to analyze the behavior as $s\uparrow 1$ of the functional $J_{m,s}$. This is the extension of the celebrated result of Bourgain-Brezis-Mironescu to the anisotropic case.

First we begin by studying the pointwise limit of the funcionales that is much simpler. Later on, we will deal with the Gamma-convergence of the funcional that is more subtle.

\subsection{Pointwise limit}
The results in this subsection are inspired by \cite{AV} where the authors consider some particular case of weight function for $p=2$. 

To begin with, we cite a Lemma that can be found in \cite{BBM}.

\begin{lema} \label{lema.cota}
Given $u\in W^{s,p}_0(\Omega)$ it holds that
$$
\int_{\R^n} m(x,h) \frac{|u(x)-u(x-h)|^p}{|h|^{n+sp}}\,dh \le m_+ [u]_{s,p}^p \leq  \frac{n\omega_n m_+}{p} \left(\frac{1}{1-s} \|\nabla u\|_p^p + \frac{2^p}{s} \|u\|_p^p\right).
$$
\end{lema}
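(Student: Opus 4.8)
The plan is to reduce everything to the classical Bourgain--Brezis--Mironescu estimate by exploiting the upper bound $m(x,h)\le m_+$ in \eqref{H1}. First I would note that, since $m\ge 0$ and $m\le m_+$ pointwise,
$$
\int_{\R^n} m(x,h) \frac{|u(x)-u(x-h)|^p}{|h|^{n+sp}}\,dh \le m_+ \int_{\R^n}\frac{|u(x)-u(x-h)|^p}{|h|^{n+sp}}\,dh,
$$
and integrating in $x$ gives the first inequality $m_+[u]_{s,p}^p$ after recognizing the double integral as $[u]_{s,p}^p$ (here one uses Tonelli to justify swapping the order of integration, the integrand being nonnegative).

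For the second inequality, the content is the BBM-type bound $[u]_{s,p}^p \le \frac{n\omega_n}{p}\big(\frac{1}{1-s}\|\nabla u\|_p^p + \frac{2^p}{s}\|u\|_p^p\big)$, which is exactly the Lemma quoted from \cite{BBM}; the statement here simply multiplies it through by $m_+$. The standard argument I would reproduce is to split the $h$-integral into $|h|\le 1$ and $|h|>1$. On $|h|>1$ one estimates $|u(x)-u(x-h)|^p \le 2^{p-1}(|u(x)|^p + |u(x-h)|^p)$, integrates in $x$ to get $2^p\|u\|_p^p$ times $\int_{|h|>1}|h|^{-n-sp}\,dh = \frac{n\omega_n}{sp}$, yielding the $\frac{2^p}{s}$ term. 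On $|h|\le 1$ one writes $u(x)-u(x-h) = \int_0^1 \nabla u(x-th)\cdot h\,dt$ (valid for $u\in W^{1,p}$, then extended by density), applies Jensen/Hölder in $t$ to get $|u(x)-u(x-h)|^p \le |h|^p\int_0^1 |\nabla u(x-th)|^p\,dt$, integrates in $x$ (translation invariance kills the $t$-dependence) to obtain $|h|^p\|\nabla u\|_p^p$, and then $\int_{|h|\le 1}|h|^{p-n-sp}\,dh = \frac{n\omega_n}{p(1-s)}$ produces the $\frac{1}{1-s}$ term. Adding the two pieces gives the claimed bound.

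Since the statement is cited verbatim as a lemma from \cite{BBM}, the "proof" here is really just the observation that $m_+$ factors out of the first estimate plus a pointer to \cite{BBM} (or \cite{DPV}) for the second. The only genuine subtlety — and the one place I would be slightly careful — is the density argument needed to pass from smooth compactly supported $u$ (for which the fundamental theorem of calculus step is transparent) to general $u\in W^{s,p}_0(\Omega)$ with $\nabla u\in L^p$; but this is entirely routine and the inequality is stable under the relevant limits. So I do not expect any real obstacle: the work is bookkeeping of the two elementary radial integrals $\int_{|h|\le 1}|h|^{p-n-sp}\,dh$ and $\int_{|h|>1}|h|^{-n-sp}\,dh$.
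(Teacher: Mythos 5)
Your proposal is correct and follows essentially the same route as the paper, which simply combines the bound $m\le m_+$ from \eqref{H1} with the classical estimate of \cite[Theorem 1]{BBM}; your reproduction of the BBM splitting into $|h|\le 1$ and $|h|>1$ with the two radial integrals gives exactly the stated constants. No issues.
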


\begin{proof}
Just combine \eqref{H1} with \cite[Theorem 1]{BBM}.
\end{proof}

The following proposition is key in the proof of our main result.

\begin{prop} \label{prop.clave}
Given $u\in C^2_c(\R^n)$ and a fixed $x\in\R^n$ we have that
$$
\lim_{s\uparrow 1} (1-s) \int_{\R^n} m(x,h) \frac{|u(x)-u(x-h)|^p}{|h|^{n+sp}}\,dh = \A(x, \nabla u)
$$
where $\A(x,\xi)$ is given by 
\begin{equation} \label{coef}
\A(x,\xi)= \frac1p \int_{\Sn} a(x,w)|\xi\cdot w|^p  \, d\Hn.
\end{equation}
\end{prop}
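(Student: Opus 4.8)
The plan is to mimic the Bourgain–Brezis–Mironescu pointwise argument, using a Taylor expansion of $u$ around $x$ together with the radial asymptotics \eqref{H3} of the kernel. First I would split the integral over $h$ into the region $|h|\le\rho$ and $|h|>\rho$ for a fixed small $\rho>0$. On the far region $|h|>\rho$, since $u\in C^2_c(\R^n)$ is bounded and $m\le m_+$, the integrand is controlled by $2^p m_+\|u\|_\infty^p |h|^{-n-sp}$, which is integrable with a bound of order $(sp)^{-1}\rho^{-sp}$; multiplying by $(1-s)$ sends this contribution to $0$ as $s\uparrow 1$. So only the near-diagonal region matters, as expected.

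On the near region, I would write $h=r\omega$ with $r\in(0,\rho]$, $\omega\in\Sn$, so $dh = r^{n-1}\,dr\,d\Hn(\omega)$, and expand
$$
u(x)-u(x-h) = \nabla u(x)\cdot h + O(|h|^2) = r\,\nabla u(x)\cdot\omega + O(r^2),
$$
uniformly in $\omega$, using $u\in C^2_c$. Raising to the $p$-th power gives $|u(x)-u(x-h)|^p = r^p|\nabla u(x)\cdot\omega|^p + O(r^{p+1})$ (with the error again uniform in $\omega$; one must be a little careful when $p<2$ in controlling the difference of $p$-th powers, but the elementary inequality $\big||a+t|^p-|a|^p\big|\le C(|a|^{p-1}|t|+|t|^p)$ handles it). Combined with \eqref{H3}, $m(x,r\omega)=a(x,\omega)+O(r)$, the integrand on $|h|\le\rho$ becomes
$$
(1-s)\big(a(x,\omega)+O(r)\big)\frac{r^p|\nabla u(x)\cdot\omega|^p + O(r^{p+1})}{r^{n+sp}}\,r^{n-1} = (1-s)\,a(x,\omega)|\nabla u(x)\cdot\omega|^p\, r^{p(1-s)-1} + (\text{errors}).
$$
Integrating $r^{p(1-s)-1}$ over $(0,\rho]$ gives $\rho^{p(1-s)}/(p(1-s))$, so the main term equals $\frac{1}{p}\rho^{p(1-s)}\int_{\Sn}a(x,\omega)|\nabla u(x)\cdot\omega|^p\,d\Hn$, which converges to $\A(x,\nabla u)$ as $s\uparrow 1$ since $\rho^{p(1-s)}\to 1$. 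The error terms carry an extra power of $r$, producing integrals of $r^{p(1-s)}$ over $(0,\rho]$, i.e.\ of order $\rho^{1+p(1-s)}/(1+p(1-s))$ after multiplying by $(1-s)$ this tends to $0$; hence for each fixed $\rho$ the $\limsup$ and $\liminf$ as $s\uparrow1$ of the whole expression differ from $\A(x,\nabla u)$ by $O(\rho)$, and letting $\rho\downarrow 0$ finishes the proof.

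The main obstacle I anticipate is bookkeeping the error terms so that they are genuinely uniform in $\omega$ and can be absorbed after multiplication by $(1-s)$ — in particular making precise the $O(r)$ in \eqref{H3} interacting with the Taylor remainder of $|{\cdot}|^p$, and checking that no error term accidentally blows up like $(1-s)^{-1}$. A secondary technical point is justifying the interchange of the $r$- and $\omega$-integrals and the splitting, which is routine by Fubini–Tonelli since everything in sight is nonnegative or absolutely integrable on the truncated regions. None of this is deep; it is the standard BBM computation with the constant weight $1$ replaced by the angular profile $a(x,\omega)$, and the only new input is hypothesis \eqref{H3}.
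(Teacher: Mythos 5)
Your argument is correct and is essentially the paper's own proof: split the $h$-integral near and far from the origin, discard the far part using boundedness of $u$ and $m\le m_+$ after multiplying by $(1-s)$, replace $|u(x)-u(x-h)|^p$ by $|\nabla u(x)\cdot h|^p$ with a Taylor-type error that the factor $(1-s)$ annihilates, then pass to polar coordinates and use \eqref{H3} to evaluate the radial integral, giving $\tfrac1p\int_{\Sn} a(x,\omega)|\nabla u(x)\cdot\omega|^p\,d\Hn$. The only cosmetic differences are your cut-off radius $\rho$ in place of the paper's $1$, a slightly different elementary estimate for the difference of $p$-th powers, and the final $\rho\downarrow 0$ step, which is superfluous since for each fixed $\rho$ the error terms already vanish as $s\uparrow 1$.
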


\begin{rem}
In the linear case, that is when $p=2$, the operator $\A(x,\xi)$ has a more explicit form,
$$
\A(x,\xi)=A(x)\xi\cdot\xi,
$$
where the matrix $A\in \R^{n\times n}$ is given by
$$
a_{ij}(x) = \frac12 \int_{\Sn} w_iw_j a(x,w)\, d\Hn.
$$
\end{rem}

\begin{proof}[Proof of Proposition \ref{prop.clave}]
For each fixed $x\in\R^n$ we split the integral
\begin{align*}
\int_{\R^n} m(x,h) \frac{|u(x)-u(x-h)|^p}{|h|^{n+sp}}\,dh  &= 
\left(\int_{|h|\geq 1} + \int_{|h|< 1} \right) m(x,h) \frac{|u(x)-u(x-h)|^p}{|h|^{n+sp}}\,dh \\
&= I_1 + I_2.
\end{align*}
Since \eqref{H1} holds, 
$$
|I_1|\leq 2^{p-1} m_+ \|u\|_\infty^p  \int_{|h|\geq 1} \frac{1}{|h|^{n+sp}}\,dh <\infty,
$$
and we focus only on $I_2$. Since $x\mapsto |x|^p$ is locally Lipschitz and $u\in C^2$, we have that
$$
\left| \frac{|u(x)-u(x-h)|^p}{|h|^{sp}} - \frac{|\nabla u(x)\cdot h|^p}{|h|^{sp}} \right|\leq L \frac{|u(x)-u(x-h) - \nabla u(x)\cdot h|}{|h|^{s}}  \le C |h|^{2-s}
$$
where $C$ depends of the $C^2-$norm of $u$.

Since the following integral vanishes
$$
\lim_{s\uparrow 1}(1-s)\int_{|h|\leq 1} |h|^{2-s-n}\,dh = \lim_{s\uparrow 1}(1-s)\frac{n\omega_n}{2-s} =0,
$$
it follows that
\begin{align*}
\lim_{s\uparrow 1} (1-s)I_2 &=\lim_{s\uparrow 1} (1-s)\int_{|h|\leq 1}m(x,h) \frac{|\nabla u(x)\cdot h|^p}{|h|^{n+sp}}\,dh
\end{align*}
Hence, by using polar coordinates we get
\begin{align*}
\int_{|h|\leq 1}m(x,h) \frac{|\nabla u(x)\cdot h|^p}{|h|^{n+sp}}\,dh 
&=  \int_{|h|\leq 1}m(x,h) \frac{|\nabla u(x)\cdot \tfrac{h}{|h|}|^p}{|h|^{n+sp-p}}\,dh \\
&=  \int_0^1 \int_{\Sn} m(x,r\omega) |\nabla u(x)\cdot \omega|^p r^{p(1-s)-1} \, d\Hn\, dr\\
&=  \int_{\Sn} |\nabla u(x)\cdot \omega|^p \left( \int_0^1 m(x,r\omega)  r^{p(1-s)-1} \,dr\right) \, d\Hn.
\end{align*}

Observe that from \eqref{H3} we have that $m(x,r\omega)= a(x,\omega)+O(r)$,  which implies that
\begin{align*}
\int_0^1 m(x,r\omega)  r^{p(1-s)-1} \,dr &=  \int_0^1 a(x,\omega)  r^{p(1-s)-1} \,dr  +  \int_0^1   O(r^{p(1-s)}) \,dr\\
&=  a(x,\omega) \frac{1}{p(1-s)}  +  O(1)
\end{align*}
and consequently
$$
\lim_{s\uparrow 1}(1-s)\int_0^1 m(x,r\omega)  r^{p(1-s)-1} \,dr = \frac1p a(x,\omega).
$$ 
Finally, 
\begin{align*}
\lim_{s\uparrow 1}(1-s)& \int_{|h|\leq 1}m(x,h) \frac{|\nabla u(x)\cdot h|^p}{|h|^{n+sp}}\,dh\\
&= \lim_{s\uparrow 1}(1-s) \int_{\Sn} |\nabla u(x)\cdot \omega|^p \left( \int_0^1 m(x,r\omega)  r^{p(1-s)-1} \,dr\right) \, d\Hn\\
&=  \frac1p \int_{\Sn} a(x,\omega)|\nabla u(x)\cdot \omega|^p  \, d\Hn,
\end{align*}
which concludes the proof.
\end{proof}

We are ready to state and proof our main result in this subsection.
 
\begin{thm} \label{teo.bbm}
Given $u\in L^p(\R^n)$ and a fixed $x\in\R^n$ we have that
$$
\lim_{s\uparrow 1} (1-s) \iint_{\R^n\times \R^n} m(x,h) \frac{|u(x)-u(x-h)|^p}{|h|^{n+sp}}\,dxdh = \int_{\R^n} \A(x, \nabla u)\,dx
$$
where $\A(x,\xi)$ is given in \eqref{coef}.
\end{thm}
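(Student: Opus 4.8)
The plan is to bootstrap the pointwise (in $x$) limit of Proposition~\ref{prop.clave} up to the claimed integral identity by a density argument. \emph{Step 1: $u\in C^2_c(\R^n)$.} Put
\[
\Phi_s(x):=(1-s)\intr m(x,h)\frac{|u(x)-u(x-h)|^p}{|h|^{n+sp}}\,dh .
\]
By Proposition~\ref{prop.clave}, $\Phi_s(x)\to\A(x,\nabla u(x))$ as $s\uparrow 1$ for every $x\in\R^n$, and by \eqref{H1} we have $0\le\Phi_s(x)\le g_s(x):=m_+(1-s)\intr|u(x)-u(x-h)|^p|h|^{-n-sp}\,dh$. Applying Proposition~\ref{prop.clave} also with the constant weight $m\equiv1$ (the pointwise BBM limit) gives $g_s\to m_+\mathcal{K}_{p,n}|\nabla u|^p$ pointwise, while the BBM theorem of \cite{BBM} gives $\intr g_s\,dx=m_+(1-s)[u]_{s,p}^p\to m_+\mathcal{K}_{p,n}\|\nabla u\|_p^p$. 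Hence the generalized dominated convergence theorem applies and yields $\lim_{s\uparrow1}\intr\Phi_s\,dx=\intr\A(x,\nabla u)\,dx$, which is the statement for $u\in C^2_c(\R^n)$. (One can also produce a fixed $L^1(\R^n)$ majorant for $\{g_s\}_{s\in[1/2,1)}$ directly, by splitting the $h$-integral at $|h|=1$ and using that $u$ is Lipschitz with compact support.)

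\emph{Step 2: $u\in W^{1,p}(\R^n)$.} For $v\in W^{s,p}(\R^n)$ set $N_{m,s}(v):=J_{m,s}(v)^{1/p}$; by Minkowski's inequality $N_{m,s}$ is a seminorm, and by \eqref{H1} and Lemma~\ref{lema.cota},
\[
N_{m,s}(v)^p\le\frac{m_+}{p}(1-s)[v]_{s,p}^p\le\frac{n\omega_n m_+}{p^2}\Big(\|\nabla v\|_p^p+\tfrac{2^p(1-s)}{s}\|v\|_p^p\Big)\le C\|v\|_{W^{1,p}}^p
\]
for all $s\in[\tfrac12,1)$, with $C=C(n,p,m_+)$ independent of $s$. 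Moreover $v\mapsto\intr\A(x,\nabla v)\,dx$ is continuous on $W^{1,p}(\R^n)$: since $\xi\mapsto\A(x,\xi)$ is continuous with $0\le\A(x,\xi)\le m_+\mathcal{K}_{p,n}|\xi|^p$, along any $W^{1,p}$-convergent sequence one passes to a subsequence with a.e.\ convergent, $L^p$-dominated gradients and invokes dominated convergence, the limit being subsequence-independent. Now fix $u\in W^{1,p}(\R^n)$ and $u_j\in C^2_c(\R^n)$ with $u_j\to u$ in $W^{1,p}(\R^n)$, and set $L_j:=(\intr\A(x,\nabla u_j)\,dx)^{1/p}$, $L:=(\intr\A(x,\nabla u)\,dx)^{1/p}$. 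The triangle inequality for $N_{m,s}$, the $s$-uniform bound above and Step~1 give
\[
\limsup_{s\uparrow1}\big|N_{m,s}(u)-L\big|\le\limsup_{s\uparrow1}N_{m,s}(u-u_j)+\lim_{s\uparrow1}\big|N_{m,s}(u_j)-L_j\big|+|L_j-L|\le C^{1/p}\|u-u_j\|_{W^{1,p}}+|L_j-L|
\]
for every $j$, and letting $j\to\infty$ (so that $\|u-u_j\|_{W^{1,p}}\to0$ and, by the continuity just recorded, $L_j\to L$) yields $N_{m,s}(u)\to L$, that is, $\lim_{s\uparrow1}J_{m,s}(u)=\intr\A(x,\nabla u)\,dx$.

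\emph{Step 3 and main obstacle.} If $u\in L^p(\R^n)\setminus W^{1,p}(\R^n)$, the characterization in \cite{BBM} forces $\liminf_{s\uparrow1}(1-s)[u]_{s,p}^p=+\infty$, hence by \eqref{H1} $J_{m,s}(u)\ge\tfrac{m_-}{p}(1-s)[u]_{s,p}^p\to+\infty$ and the identity holds with the convention $\intr\A(x,\nabla u)\,dx:=+\infty$. The genuinely delicate point is Step~2: one must lift the pointwise-in-$x$ convergence of Step~1 to arbitrary Sobolev data while keeping all estimates uniform as $s\uparrow1$. That uniformity is exactly what the seminorm structure of $N_{m,s}=J_{m,s}^{1/p}$ together with the $s$-independent bound of Lemma~\ref{lema.cota} secures; without it the far region $\{|h|\ge1\}$ or the rough part of $u$ could in principle survive in the limit.
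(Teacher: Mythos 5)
Your proposal is correct and follows essentially the same route as the paper: Proposition \ref{prop.clave} plus a dominated-convergence argument for $u\in C^2_c(\R^n)$ (the paper exhibits one fixed $L^1$ majorant, you invoke the generalized dominated convergence theorem with $s$-dependent majorants whose integrals converge by the classical BBM limit), then the BBM-style density/seminorm argument based on Lemma \ref{lema.cota} for $u\in W^{1,p}(\R^n)$, and finally the BBM characterization to handle $u\in L^p(\R^n)\setminus W^{1,p}(\R^n)$, exactly as in the paper's last step. The only blemish is a harmless normalization slip in Step 2: since $J_{m,s}$ carries the factor $\tfrac1p$ while the theorem's left-hand side does not, your Step 1 actually gives $N_{m,s}(u_j)\to p^{-1/p}L_j$ rather than $L_j$, so you should work with $\bigl(pJ_{m,s}\bigr)^{1/p}$ (equivalently absorb the $\tfrac1p$ into $L_j$ and $L$), after which the triangle-inequality argument and the conclusion are unchanged.
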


\begin{proof}
Given $u\in C^2_c(\R^n)$ with $\text{supp}(u)\subset B_R(0)$, in view of Proposition \ref{prop.clave}, it only remains to show the existence of an integrable majorant for $(1-s)F_s$, where $F_s$ is given by
$$
F_s(x):=\int_{\R^n} m(x,h) \frac{|u(x)-u(x-h)|^p}{|h|^{n+sp}}\,dh.
$$
But, thanks to \eqref{H1}, this task is the same done in \cite{BBM}, more precisely, 
$$
(1-s) |F_s(x)| \leq C m_+ \left( \chi_{B_R(0)}(x) + |x|^{-(n+\tfrac12)}\chi_{B_r(0)^c}(x) \right) \in L^1(\R^n).
$$
Then, from Proposition \ref{prop.clave} and the Dominated Convergence Theorem the result follows for any $u\in C^2_c(\R^n)$.

Using Lemma \ref{lema.cota} and \cite[Theorem 2]{BBM}, the result is extended to an arbitrary function $u\in W^{1,p}(\R^n)$.

Finally, arguing as in \cite[Theorem 2]{BBM} (see also \cite{FBS}) it holds that if
$$
\liminf_{s\uparrow 1} (1-s) \iint_{\R^n\times \R^n} m(x,h) \frac{|u(x)-u(x-h)|^p}{|h|^{n+sp}}\,dxdh  <\infty,
$$
then $u\in W^{1,p}(\R^n)$ and the result follows.
\end{proof}

\subsection{The case of a sequence}
In this subsection we deal with the case of a sequence that will imply, among other things, the Gamma convergence of the functionals $J_{m,s}$.

\begin{thm}\label{main2}
Let $0\le s_k\  \uparrow 1$ and $\{u_k\}_{k\in\N}\subset L^p(\R^n)$ be such that
$$
\sup_{k\in\N} (1-s_k) [u_k]_{s_k,p}^p <\infty \quad \text{and}\quad \sup_{k\in\N} \|u_k\|_{L^p(\R^n)} < \infty.
$$
Then there exists $u\in L^p(\R^n)$ and a subsequence $\{u_{k_j}\}_{j\in\N}\subset \{u_k\}_{k\in\N}$ such that $u_{k_j}\to u$ in $L^p_{loc}(\R^n)$. Moreover $u\in W^{1,p}(\R^n)$ and the following estimate holds
$$
\int_{\R^n} \A(x, \nabla u)\, dx \le \liminf_{k\to\infty} J_{m,s_k}(u_k).
$$
\end{thm}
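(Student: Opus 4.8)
The plan is to combine a compactness argument with a lower semicontinuity (liminf) inequality of the BBM type, localized so that the anisotropic weight $m$ contributes only through its radial limit $a$. First I would extract the limit function $u$: since $\{u_k\}$ is bounded in $L^p(\R^n)$ and $(1-s_k)[u_k]_{s_k,p}^p$ is uniformly bounded, a standard compactness result (the analogue of Theorem \ref{R-K} adapted to a sequence of varying fractional parameters $s_k\uparrow 1$, as in \cite[Theorem 4]{BBM}) yields a subsequence converging in $L^p_{loc}(\R^n)$ to some $u$, and the uniform BBM-type bound forces $u\in W^{1,p}(\R^n)$. I would then fix this subsequence and relabel it as $\{u_k\}$, so the goal reduces to proving $\int_{\R^n}\A(x,\nabla u)\,dx\le\liminf_{k\to\infty} J_{m,s_k}(u_k)$.

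For the liminf inequality the key step is a localization-and-lower-bound argument. One useful device is to fix a small ball $B_\rho(x_0)$ and restrict the double integral in $J_{m,s_k}(u_k)$ to $x\in B_\rho(x_0)$, $|h|\le\rho$; on such a region hypothesis \eqref{H1} gives $m(x,h)\ge m_-$, but more precisely \eqref{H3} gives $m(x,h)\ge a(x,h/|h|)-C|h|$, so after passing to the limit the $O(|h|)$ error disappears (exactly as in the proof of Proposition \ref{prop.clave}) and one recovers the anisotropic density $\A(x,\nabla u)$ weighted by $a$. To make this rigorous I would use the dual/variational characterization: $\A(x,\xi)=\tfrac1p\int_{\Sn}a(x,w)|\xi\cdot w|^p\,d\Hn$ is convex in $\xi$, hence expressible as a supremum of affine functions, and one can test the Gagliardo-type difference quotient against smooth vector fields. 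Alternatively, and more in the spirit of \cite{BBM} and \cite{FBS}, I would approximate $u$ in $W^{1,p}$ by $u^\delta\in C^2_c$, use the elementary convexity inequality $|t|^p\ge|t_0|^p+p|t_0|^{p-2}t_0(t-t_0)$ on the integrand to linearize the difference quotient around $\nabla u^\delta(x)\cdot h$, apply Proposition \ref{prop.clave} (pointwise limit) together with the dominated-convergence majorant from Lemma \ref{lema.cota} to pass to the limit in the linearized and remainder terms, and then send $\delta\to 0$. Fatou's lemma is the natural tool to convert the pointwise/local liminf estimates into the global integral inequality.

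The main obstacle I expect is handling the interaction between the weak convergence $\nabla u_k\rightharpoonup\nabla u$ (which is all one gets from the uniform seminorm bound, and only after identifying that the limit lies in $W^{1,p}$) and the nonlinear, nonlocal, $x$-dependent structure of $J_{m,s_k}$. In the classical BBM setting one exploits that $(1-s)[\cdot]_{s,p}^p$ behaves like a quadratic-type form amenable to weak lower semicontinuity; here the extra weight $m(x,h)$ with only a one-sided asymptotic control \eqref{H3} means the error terms $O(|h|)$ must be absorbed uniformly in $k$, which requires splitting $|h|\le\rho$ versus $|h|>\rho$ and carefully using that $(1-s_k)\int_{|h|>\rho}|h|^{-n-s_k p}\,dh\to 0$. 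A secondary technical point is that the $u_k$ need not be compactly supported or equi-integrable at infinity, so the passage from $L^p_{loc}$ convergence to the global estimate must be done through an exhaustion of $\R^n$ by balls together with Fatou, rather than by dominated convergence directly. Once these localization estimates are in place, the conclusion follows by letting the ball radius and the mollification parameter tend to their limits in the appropriate order.
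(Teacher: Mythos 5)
Your compactness step is fine: since $m_\pm$ in \eqref{H1} bound the weight above and below, the hypotheses reduce that part to the unweighted situation, and the paper does exactly this, except that it makes the reduction explicit through Theorem \ref{teo.s1.s2} (the comparison $J_{m,s_1}(u)\le 2^{p(1-s_1)}J_{m,s_2}(u)+C(1-s_1)s_1^{-1}\|u\|_p^p$ for $s_1<s_2$, proved via Lemma \ref{lema.bbm}), which gives a uniform bound in a \emph{fixed} $W^{t,p}(\R^n)$ and lets one invoke Theorem \ref{R-K} directly. The genuine gap is in your liminf argument. You write that the uniform seminorm bound yields ``weak convergence $\nabla u_k\rightharpoonup\nabla u$''; it does not, because the functions $u_k$ need not belong to $W^{1,p}(\R^n)$ at all, so there are no gradients $\nabla u_k$ to speak of. This undermines the linearization route as you sketch it: after using the convexity inequality $|t|^p\ge|t_0|^p+p|t_0|^{p-2}t_0(t-t_0)$ around the smooth approximation $u^\delta$, one must identify the limit of the cross term
\begin{equation*}
(1-s_k)\iint_{\R^n\times\R^n} m(x,h)\,\frac{|u^\delta(x)-u^\delta(x-h)|^{p-2}\bigl(u^\delta(x)-u^\delta(x-h)\bigr)\bigl[(u_k-u^\delta)(x)-(u_k-u^\delta)(x-h)\bigr]}{|h|^{n+s_kp}}\,dh\,dx
\end{equation*}
with only $L^p_{\mathrm{loc}}$ convergence of $u_k$ and the uniform seminorm bound at hand; this would require, e.g., strong $L^{p'}$ convergence of $(1-s_k)\LL_{m,s_k}u^\delta$ to the local operator applied to $u^\delta$, which is not established anywhere (the paper proves energy convergence, not operator convergence), and which you do not supply. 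So as written the key inequality is not proved.

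The paper avoids this difficulty entirely and you never reconstruct its device: fix $t<1$, apply Fatou's lemma to the double integral with the \emph{fixed} kernel $m(x,h)|h|^{-n-tp}$ (a.e.\ convergence of $u_k$ suffices there, no gradients needed), then use Theorem \ref{teo.s1.s2} to dominate the fixed-$t$ energy of $u_k$ by $2^{p(1-t)}$ times $(1-s_k)$-energy at level $s_k$ plus a controlled multiple of $\|u_k\|_p^p$ whose prefactor carries a factor $(1-t)$, and finally let $t\uparrow 1$, invoking Theorem \ref{teo.bbm} for the fixed limit function $u$ to recover $\int_{\R^n}\A(x,\nabla u)\,dx$ on the left (the finiteness of the liminf also gives $u\in W^{1,p}(\R^n)$). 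If you want to salvage your approach, you would either have to prove the operator convergence mentioned above, or switch to the paper's two-parameter comparison; the latter is both shorter and the step your proposal is missing.
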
 

The proof of the above result will be a direct consequence of the following useful estimate:

\begin{thm} \label{teo.s1.s2}
Let $0<s_1<s_2<1$ and $u \in L^p(\R^n)$. Then
\begin{align*}
J_{m,s_1}(u)\le 2^{p(1-s_1)} J_{m,s_2}(u) + \frac{2^{p-1}m_+ n\omega_n (1-s_1)}{s_1}\|u\|_p^p.
\end{align*}
\end{thm}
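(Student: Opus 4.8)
The plan is to estimate the Gagliardo-type integral at level $s_1$ by the one at level $s_2$ plus an $L^p$ remainder, splitting the $h$-integral into a near-diagonal part $|h|\le R$ and a far part $|h|>R$, and then optimizing the cutoff $R$. For each fixed $x$, write $|h|^{-(n+s_1p)} = |h|^{-(n+s_2p)}\,|h|^{(s_2-s_1)p}$. On the region $|h|\le R$ we bound $|h|^{(s_2-s_1)p}\le R^{(s_2-s_1)p}$, so that
\begin{align*}
\iint_{|h|\le R} m(x,h)\frac{|u(x)-u(x-h)|^p}{|h|^{n+s_1p}}\,dxdh
\le R^{(s_2-s_1)p}\iint_{\R^n\times\R^n} m(x,h)\frac{|u(x)-u(x-h)|^p}{|h|^{n+s_2p}}\,dxdh.
\end{align*}
On the region $|h|>R$ we use $|u(x)-u(x-h)|^p\le 2^{p-1}(|u(x)|^p+|u(x-h)|^p)$, Fubini, translation invariance of Lebesgue measure, and hypothesis \eqref{H1} to get
\begin{align*}
\iint_{|h|>R} m(x,h)\frac{|u(x)-u(x-h)|^p}{|h|^{n+s_1p}}\,dxdh
\le 2^{p-1}m_+\|u\|_p^p\cdot 2\int_{|h|>R}\frac{dh}{|h|^{n+s_1p}}
= 2^{p}m_+\|u\|_p^p\,\frac{n\omega_n}{s_1p}\,R^{-s_1p}.
\end{align*}

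Next I would multiply both pieces by $\frac{1-s_1}{p}$ to recover $J_{m,s_1}(u)$ on the left. For the near-diagonal piece this produces the factor $\frac{1-s_1}{p}R^{(s_2-s_1)p}$; comparing with $J_{m,s_2}(u)=\frac{1-s_2}{p}\iint m(x,h)\frac{|u(x)-u(x-h)|^p}{|h|^{n+s_2p}}\,dxdh$ and using $1-s_1>1-s_2$ (so $\tfrac{1-s_1}{1-s_2}\ge 1$) one is led to choose $R=2^{1/p}$, which makes $R^{(s_2-s_1)p}=2^{s_2-s_1}\le 2$ — actually to land exactly on the stated constant $2^{p(1-s_1)}$ the right normalization is $R=2$, giving $R^{(s_2-s_1)p}=2^{(s_2-s_1)p}\le 2^{(1-s_1)p}$ since $s_2<1$, and $\tfrac{1-s_1}{1-s_2}\le\tfrac{1-s_1}{1-s_2}$ is absorbed because in fact the cleanest route is to note $\tfrac{1-s_1}{p}R^{(s_2-s_1)p}\iint m\,|\cdot|^{-(n+s_2p)} \le 2^{p(1-s_1)}\tfrac{1-s_2}{p}\iint m\,|\cdot|^{-(n+s_2p)}$ is NOT automatic, so instead one keeps $R$ free, bounds $\tfrac{1-s_1}{1-s_2}\le 1$...

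Let me restate this more carefully. With $R=2$ fixed, the near-diagonal contribution to $J_{m,s_1}(u)$ is at most $\tfrac{1-s_1}{p}\,2^{(s_2-s_1)p}\iint m(x,h)\frac{|u(x)-u(x-h)|^p}{|h|^{n+s_2p}}\,dxdh$. Since $0<1-s_2$ and $s_2-s_1<1-s_1$, this is $\le 2^{(1-s_1)p}\cdot\tfrac{1-s_1}{p}\iint(\cdots)$, and it remains to pass from the weight $\tfrac{1-s_1}{p}$ to $\tfrac{1-s_2}{p}$; but $1-s_1\ge 1-s_2$ fails in the needed direction, so the correct and standard fix is simply to observe that the whole argument only needs $\tfrac{1-s_1}{p}\iint m\,|\cdot|^{-(n+s_2p)}\le \tfrac{1-s_2}{p}\iint m\,|\cdot|^{-(n+s_2p)}\cdot\tfrac{1-s_1}{1-s_2}$, and absorb the harmless factor $\tfrac{1-s_1}{1-s_2}$ into the constant $2^{p(1-s_1)}$ via the elementary bound $\tfrac{1-s_1}{1-s_2}\,2^{(s_2-s_1)p}\le 2^{p(1-s_1)}$ — which does hold because $\log\tfrac{1-s_1}{1-s_2}\le (s_2-s_1)$ and $(s_2-s_1)(1+p\log 2)\le p(1-s_1)\log 2$ is clear from $s_2<1$ when $p\log 2\ge 1$; for small $p$ one replaces $R=2$ by a larger constant, or more cleanly chooses $R$ depending on $s_1,s_2$ to optimize, which is the honest way to get the sharp constant. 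The far part, multiplied by $\tfrac{1-s_1}{p}$, yields exactly $\tfrac{2^{p-1}m_+n\omega_n(1-s_1)}{s_1}\|u\|_p^p\cdot R^{-s_1p}$, and with $R=2$ the factor $2^{-s_1p}\le 1$ so this is $\le \tfrac{2^{p-1}m_+n\omega_n(1-s_1)}{s_1}\|u\|_p^p$ as claimed. I would present the clean version: fix $R=2$, carry both estimates, and check the two elementary inequalities $2^{(s_2-s_1)p}\le 2^{(1-s_1)p}$ and $\tfrac{1-s_1}{1-s_2}\le$ (something absorbable), or alternatively keep the normalization $\tfrac{1-s}{p}$ attached to each functional from the start so that $J_{m,s_1}(u)\le \tfrac{1-s_1}{1-s_2}2^{(s_2-s_1)p}J_{m,s_2}(u)+\dots$ and then note $\tfrac{1-s_1}{1-s_2}2^{(s_2-s_1)p}\le 2^{p(1-s_1)}$.

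The main obstacle is purely bookkeeping of constants: getting the coefficient of $J_{m,s_2}(u)$ down to exactly $2^{p(1-s_1)}$ and the $L^p$-coefficient to exactly $\tfrac{2^{p-1}m_+n\omega_n(1-s_1)}{s_1}$ requires choosing the cutoff $R$ correctly (the value $R=2$ is what produces these numbers) and then verifying two elementary scalar inequalities in $s_1,s_2,p$; there is no analytic difficulty since no limit is taken and \eqref{H3} is not used — only the two-sided bound \eqref{H1}, the elementary inequality $|a-b|^p\le 2^{p-1}(|a|^p+|b|^p)$, Fubini, and translation invariance.
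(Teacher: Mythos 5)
Your far-field estimate ($|h|>R$) is fine and is essentially the same as the paper's, but the near-diagonal step contains a genuine gap, not a bookkeeping issue. On $|h|\le R$ the pointwise bound $|h|^{-(n+s_1p)}\le R^{(s_2-s_1)p}|h|^{-(n+s_2p)}$ inevitably leaves the coefficient $\frac{1-s_1}{1-s_2}R^{(s_2-s_1)p}$ in front of $J_{m,s_2}(u)$, and this cannot be absorbed into $2^{p(1-s_1)}$, nor into any constant independent of $s_2$: for fixed $s_1$ the ratio $\frac{1-s_1}{1-s_2}$ blows up as $s_2\uparrow 1$. The scalar inequalities you invoke to dispose of it are false; for instance with $s_1=\tfrac12$, $s_2=0.99$ one has $\log\frac{1-s_1}{1-s_2}=\log 50\approx 3.9>s_2-s_1$, and $\frac{1-s_1}{1-s_2}\,2^{(s_2-s_1)p}\to\infty$ as $s_2\uparrow1$ while $2^{p(1-s_1)}$ stays bounded. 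Choosing the cutoff $R=R(s_1,s_2)$ does not repair this: to compensate $\frac{1-s_1}{1-s_2}$ you are forced to take $R\lesssim (1-s_2)^{1/((s_2-s_1)p)}\to 0$, and then the tail term carries the factor $R^{-s_1p}\to\infty$, so the $\|u\|_p^p$-coefficient would depend on $s_2$ and blow up as $s_2\uparrow 1$. Since the whole point of the estimate (this is exactly how it is used in the proof of Theorem \ref{main2}, with $s_2=s_k\uparrow 1$ and $s_1=t$ fixed) is that both constants depend only on $s_1$, your argument cannot yield the statement.

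The missing ingredient is the correlation-type inequality of Bourgain--Brezis--Mironescu, Lemma \ref{lema.bbm}. The paper writes the near-diagonal part in polar form as $\int_0^1 t^{p(1-s)-1}g(t)\,dt$ with $g(t)=t^{-p}\int_{\Sn}\int_{\R^n}m(x,t\omega)|u(x)-u(x-t\omega)|^p\,dx\,d\Hn$, establishes the doubling property $g(2t)\le\frac{m_+}{m_-}\,g(t)$ (which comes from $|u(x)-u(x-2t\omega)|^p\le 2^{p-1}\left(|u(x)-u(x-t\omega)|^p+|u(x-t\omega)-u(x-2t\omega)|^p\right)$ combined with \eqref{H1}, i.e.\ from the structure of the difference quotient, not from \eqref{H1} alone), and then applies Lemma \ref{lema.bbm} with $r=p(1-s_1)-1$ and $h(t)=t^{-p(s_2-s_1)}$. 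It is this lemma that produces the factor $\int_0^1 t^{p(1-s_2)-1}\,dt=\frac{1}{p(1-s_2)}$ on the right-hand side, so that after multiplying by $1-s_1$ the dangerous ratio $\frac{1-s_1}{1-s_2}$ cancels and only the harmless loss $2^{p(1-s_1)}$ survives. So the theorem is genuinely more than Fubini plus a pointwise kernel comparison; the uniformity in $s_2$ is the real content, and it requires the BBM lemma (or an argument of comparable depth), contrary to your closing claim that only \eqref{H1}, convexity, Fubini and translation invariance are needed.
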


The key point in proving Theorem \ref{teo.s1.s2} is the following lemma that is proved in \cite{BBM}. \begin{lema}[Lemma 2, \cite{BBM}] \label{lema.bbm}
Let $g,h:(0,1) \to \R^+$ measurable functions. Suppose that for some constant $c>0$ it holds that $g(t) \leq c g(\tfrac{t}{2})$ for $t\in (0,1)$ and that $h$ is decreasing. Then, given $r>-1$, 
$$
\int_0^1 t^r g(t)h(t)\,dt \geq \frac{r+1}{2^{r+1}}\int_0^1 t^r g(t)\,dt \int_0^1 t^r h(t) \,dt.
$$
\end{lema}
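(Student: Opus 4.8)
The target is the one-dimensional integral inequality of Lemma \ref{lema.bbm}: under the doubling condition $g(t)\le c\,g(t/2)$ and monotonicity of $h$, one has $\int_0^1 t^r g(t)h(t)\,dt \ge \tfrac{r+1}{2^{r+1}}\big(\int_0^1 t^r g\big)\big(\int_0^1 t^r h\big)$ for $r>-1$. Since this is quoted from \cite{BBM}, I only need to reconstruct its proof.

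\textbf{Plan.} The idea is a dyadic decomposition of $(0,1)$ together with the Chebyshev-type rearrangement principle that the integral of a product of a "roughly increasing" function against a decreasing function is at least the product of the integrals (suitably normalized). First I would write $(0,1)=\bigcup_{k\ge 0} I_k$ with $I_k=(2^{-k-1},2^{-k}]$, and on each $I_k$ compare $t^r g(t)$ with its value rescaled from $I_0$. The doubling hypothesis $g(t)\le c\,g(t/2)$, iterated $k$ times, gives $g(2^{-k}t)\le c^k g(t)$, equivalently $g(t)\ge c^{-k}g(2^{-k}t)$ for $t\in(0,1)$; this is what lets one control the mass of $g$ on small scales by its mass near $1$. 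The role of $h$ decreasing is that on $I_k$ we have $h(t)\ge h(2^{-k})\ge \sup_{I_{k-1}}h \ge \dots$, so $h$ is essentially constant at scale $2^{-k}$ and the "wrong-way" correlation between $g$ (which may grow as $t\downarrow 0$) and $h$ (which grows as $t\downarrow 0$) is bounded.

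The concrete mechanism I expect \cite{BBM} uses: set $G(t)=t^{r+1}g(t)$ after the substitution $t\mapsto$ (dyadic block), or more cleanly, define $\phi(k)=\int_{I_k} t^r g(t)\,dt$ and $\psi(k)=\inf_{I_k} h = h(2^{-k})$ (using $h$ decreasing, so $h\ge \psi(k)$ on $I_k$ and $h\le h(2^{-k-1})=\psi(k+1)$). Then
$$
\int_0^1 t^r g(t) h(t)\,dt = \sum_{k\ge 0}\int_{I_k} t^r g(t)h(t)\,dt \ge \sum_{k\ge 0}\psi(k)\,\phi(k).
$$
One also estimates $\int_0^1 t^r h(t)\,dt = \sum_k \int_{I_k} t^r h \le \sum_k \psi(k)\int_{I_k}t^r\,dt = \sum_k \psi(k)\, c_k$ with $c_k=\int_{I_k}t^r\,dt = \frac{2^{-k(r+1)}(1-2^{-(r+1)})}{r+1}$ (finite and positive since $r>-1$), and similarly $\int_0^1 t^r g(t)\,dt=\sum_k\phi(k)$. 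So the claim reduces to the discrete inequality
$$
\Big(\sum_k \psi(k)\phi(k)\Big)\Big(\sum_k c_k\Big) \ge \frac{r+1}{2^{r+1}}\cdot\frac{1}{?}\Big(\sum_k\phi(k)\Big)\Big(\sum_k \psi(k)c_k\Big),
$$
and the point of the doubling condition is precisely to relate $\phi(k)$ to $c_k$: iterating $g(t)\ge c^{-1}g(t/2)$ shows $\phi$ cannot decay faster than geometrically with the same ratio that makes $\phi(k)$ comparable (up to constants absorbed by $c$) to $c_k \cdot(\text{mass of }g\text{ near }1)$, which forces the sequences $(\phi(k))$ and $(c_k)$ to be "similarly ordered," so that pairing either against the decreasing sequence $(\psi(k))$ gives the same answer up to the constant $\frac{r+1}{2^{r+1}}$. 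The constant $\frac{r+1}{2^{r+1}}$ itself comes out as $c_0/\big(\sum_k c_k\big)\cdot(\text{something})$; more precisely $\sum_{k\ge 0}c_k=\int_0^1 t^r\,dt=\frac{1}{r+1}$ while each $c_k = 2^{-k(r+1)}c_0$ with $c_0 = \frac{1-2^{-(r+1)}}{r+1}$, so $c_0/\sum c_k = 1-2^{-(r+1)} \ge \frac{r+1}{2}\cdot\frac{1}{2^{r}}$ type bounds; tracking these elementary geometric-series constants is routine.

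\textbf{Main obstacle.} The delicate point is not any single estimate but organizing the doubling hypothesis so that it genuinely controls the correlation: one must show that for the decreasing sequence $\psi$, $\sum_k \psi(k)\phi(k) \ge \lambda \big(\sum_k \psi(k)c_k\big)\big(\sum_k\phi(k)\big)/\big(\sum_k c_k\big)$ with $\lambda$ depending only on $r$ (the $c$-dependence must cancel — in \cite{BBM} the final constant is $c$-free, which is a feature we must respect). The clean way is to prove the continuous statement directly: define $g^*(t)=\sup\{g(2^{-k}\cdot)\text{-rescalings}\}$ or use the "Hardy–Littlewood"-style inequality that for $h$ decreasing, $\int_0^1 t^r g(t)h(t)\,dt \ge \int_0^1 t^r \tilde g(t) h(t)\,dt$ where $\tilde g$ is the decreasing rearrangement of $t^r g$ against the measure $t^r\,dt$, and then the Chebyshev sum inequality for two decreasing functions finishes it; the doubling condition guarantees $\int_0^1 t^r \tilde g \ge c'\int_0^1 t^r g$ with $c'$ depending only on $r$ (not on $c$ — because doubling forces $t^r g(t)$ to be within a bounded-distortion of monotone at each dyadic scale and the rearrangement only redistributes mass within and across comparable scales). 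I would present it this way: (i) dyadic decomposition, (ii) replace $g$ by the piecewise-constant minorant $\underline g(t)=\inf_{I_k}g$ for $t\in I_k$ and note doubling gives $\int_{I_k}t^rg\,dt \le C\int_{I_{k-1}}t^r\underline g\,dt$-type comparisons so no mass is lost beyond a $c$-independent factor after summation, (iii) on the piecewise-constant level the product inequality is the finite Chebyshev inequality for similarly-ordered sequences, (iv) collect the explicit geometric constant $\frac{r+1}{2^{r+1}}$. I would keep (ii) brief since it is exactly the computation in \cite{BBM}, and flag (ii)–(iii) as the heart of the argument.
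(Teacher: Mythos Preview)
The paper does not prove this lemma: it quotes \cite[Lemma~2]{BBM}, where the case $c=1$ is treated, and adds only the remark that ``the extension for general $c>0$ is immediate.'' So there is no argument in the paper to compare yours against; but your proposal has a genuine gap, rooted in taking the stated $c$-independent constant at face value. You assert that ``the $c$-dependence must cancel'' and organize the entire plan around this. It cannot be done: the inequality with constant $\tfrac{r+1}{2^{r+1}}$ independent of $c$ is \emph{false} once $c>1$. With $r=0$, $g(t)=t^{\alpha}$ (so $g(t)=2^{\alpha}g(t/2)$, i.e.\ $c=2^{\alpha}$) and $h(t)=t^{-\beta}$, $0<\beta<1$, one has $\int_0^1 gh=(1+\alpha-\beta)^{-1}$ while $\tfrac12\int_0^1 g\int_0^1 h=\bigl(2(1+\alpha)(1-\beta)\bigr)^{-1}$; for $\alpha=3$, $\beta=\tfrac34$ this gives $\tfrac{4}{13}<\tfrac12$. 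Worse, for any fixed $\alpha>0$ the ratio of the left side to $\int g\int h$ tends to $0$ as $\beta\uparrow 1$, so no constant depending only on $c$ and $r$ can work either. The step in your sketch where the doubling constant is supposed to vanish therefore cannot be executed, and the dyadic/Chebyshev/rearrangement machinery you propose will never close because the target itself is false for $c>1$.

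For the case $c=1$ that \cite{BBM} actually proves, the argument is also much shorter than your outline: one verifies the pointwise bound $G(a):=\int_0^a t^r g\,dt\ge \tfrac{a^{r+1}}{2^{r+1}}G(1)$ by choosing $k$ with $a\in[2^{-k-1},2^{-k})$, rescaling $t=2^{-k-1}s$, and using the iterated inequality $g(2^{-k-1}s)\ge g(s)$; an integration by parts against the decreasing $h$ then yields the product inequality directly, with no rearrangement theory or discrete Chebyshev step.
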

Actually, the proof in \cite{BBM} is done with $c=1$. The extension for general $c>0$ is immediate.

Now we proceed with the proof of the estimate.

\begin{proof}[Proof of Theorem \ref{teo.s1.s2}]
The proof is very similar to that of \cite[Theorem 4]{BBM} (see also \cite[Theorem 5.1]{FBS}). We include some details in order to make the paper self contained.

Given $u\in L^p(\R^n)$, we define for $t>0$ and $0<s<1$,
\begin{align*}
F(t)&=\int_{\Sn} \int_{\R^n}  m(x,tw) |u(x)-u(x-tw)|^p \,dx \,d\Hn\\
&=\frac{1}{t^{n-1}}\int_{|h|=t} \int_{\R^n} m(x,h) |u(x)-u(x-h)|^p \,dx \,d\Hn
\end{align*}
and $g(t) = \frac{F(t)}{t^p}$.

From \cite[p. 13]{BBM} and assumption \eqref{H1} if follows that
$$
g(2t)\leq \frac{m_+}{m_-} g(t).
$$
Then, observe that
\begin{align} \label{dess1}
\begin{split}
\int_{|h|<1} \int_{\R^n}  &m(x,h) \frac{|u(x)-u(x-h)|^p}{|h|^{n+sp}} dx\,dh\\
&=\int_0^1 \int_{|h|=t} \int_{\R^n}  m(x,h)\frac{|u(x)-u(x-h)|^p}{t^{n+sp}} dx\, d\Hn \,dt\\
&=\int_0^1 \frac{F(t)}{t^{1+sp}} \,dt = \int_0^1 \frac{g(t)}{t^{1-p(1-s)}}\,dt.
\end{split}
\end{align}
Consider now $0<s_1<s_2<1$. Therefore,  
\begin{align*}
\int_0^1 \frac{1}{t^{1-p(1-s_2)}} g(t)\, dt = \int_0^1 \frac{1}{t^{1-p(1-s_1)}} g(t) \frac{1}{t^{p(s_2-s_1)}}\, dt.
\end{align*}
Now, from Lemma \ref{lema.bbm} with $r=p(1-s_1)-1$ and $h(t) = t^{-p(s_2-s_1)}$ we get
\begin{equation}\label{desigualdad.fundamental}
\begin{split}
\int_0^1 \frac{1}{t^{1-p(1-s_2)}} g(t)\, dt \ge& \frac{p(1-s_1)}{2^{p(1-s_1)}} \int_0^1 \frac{1}{t^{1-p(1-s_1)}} g(t)\, dt \int_0^1 \frac{1}{t^{1-p(1-s_2)}}\, dt\\
=& \frac{1}{2^{p(1-s_1)}} \frac{1-s_1}{1-s_2} \int_0^1 \frac{1}{t^{1-p(1-s_1)}} g(t)\, dt.
\end{split}
\end{equation}
From \eqref{dess1} and \eqref{desigualdad.fundamental} we deduce that
\begin{align}\label{primera.parte}
\begin{split}
\frac{(1-s_1)}{2^{p(1-s_1)}}&\int_{|h|<1}\int_{\R^n} m(x,h)\frac{|u(x)-u(x-h)|^p}{|h|^{n+s_1 p}}\, dxdh\\
&\le (1-s_2)\int_{\{|h|<1\}}\int_{\R^n} m(x,h)\frac{|u(x)-u(x-h)|^p}{|h|^{n+s_2 p}}\, dxdh.
\end{split}
\end{align}

Finally we observe that
\begin{align*}
\int_{\{|h|\ge 1\}} \int_{\R^n} m(x,h)\frac{|u(x)-u(x-h)|^p}{|h|^{n+sp}}\, dxdh &\le 2^p m_+ n\omega_n \|u\|_p^p \int_1^\infty \frac{1}{t^{1+sp}}\, dt\\
&= \frac{2^p m_+ n\omega_n}{sp} \|u\|_p^p.
\end{align*}
The proof concludes combining this last inequality with \eqref{primera.parte}. 
\end{proof}
 
Now we can proceed with the proof of Theorem \ref{main2}.

\begin{proof}[Proof of Theorem \ref{main2}]
With the help of Theorem \ref{teo.s1.s2} the proof is the same as \cite[Theorem 4]{BBM} and \cite[Theorem 5.1]{FBS}

We include some details for the reader's convenience.

Let $0<s_k\uparrow 1$ and $\{u_k\}_{k\in\N}\subset L^p(\R^n)$ such that
$$
\sup_{k\in\N} (1-s_k)[u_k]_{s_k,p}^p <\infty \quad \text{and}\quad \sup_{k\in\N} \|u_k\|_p<\infty.
$$

For a fixed $0<t<1$ using Theorem \ref{teo.s1.s2} we have that $\{u_k\}_{k\in\N}\subset W^{t,p}(\R^n)$ is bounded and so by the Rellich-Kondrashov compactness Theorem (Theorem \ref{R-K}), there exists a subsequence (still denoted by $\{u_k\}_{k\in\N}$) and a limit function $u\in L^p(\R^n)$ such that $u_k\to u$ in $L^p_\text{loc}(\R^n)$. We can also assume that $u_k\to u$ a.e. in $\R^n$.

Now, by Fatou's Lemma, we have
\begin{align*}
\iint_{\R^n\times\R^n} m(x,h) &\frac{|u(x)-u(x-h)|^p}{|h|^{n+tp}}\, dxdy\\
& \le \liminf_{k\to\infty} \iint_{\R^n\times\R^n} m(x,h)\frac{|u_k(x)-u_k(x-h)|^p}{|h|^{n+tp}}\, dxdy
\end{align*}
and by Theorem \ref{teo.s1.s2}  we obtain
\begin{align*}
\frac{1-t}{2^{(1-t)p}} \iint_{\R^n\times\R^n} &m(x,h)\frac{|u(x)-u(x-h)|^p}{|h|^{n+tp}}\, dxdy \\
\le& \liminf_{k\to\infty} (1-s_k)\iint_{\R^n\times\R^n} m(x,h)\frac{|u_k(x)-u_k(x-h)|^p}{|h|^{n+s_kp}}\, dxdy\\
&+ \frac{n\omega_n 2^p(1-t)m_+}{tp} M
\end{align*}
wher $M=\sup_{k\in\N} \|u_k\|_p^p$.

Finally the result follows taking the limit $t\uparrow 1$ and using Theorem \ref{teo.bbm}.
\end{proof}

\section{Limit as $s\downarrow 0$ of $J_{m,s}$}\label{S4}

In this section we analyze the limit case where $s\downarrow 0$ of the functionals $J_{m,s}$. This is what is called a {\em Maz'ya-Shaposhnikova type result} after the results obtained in \cite{MS}. That is we are interested in studying  the limit
$$
\lim_{s\downarrow 0} sJ_{m,s}(u).
$$

First we define the following weights depending on $m$ and $s$, 
$$
b_s(x) := 2s\int_{\Sn} \int_{2|x|}^\infty \frac{m(x,r\omega) }{r^{sp+1}} \,dr\,d\Hn.
$$
Observe that this weight has the following bounds
$$
\frac{2^{1-sp}n\omega_n}{p} \frac{m_-}{|x|^{sp}}\le b_s(x)\le\frac{2^{1-sp}n\omega_n}{p} \frac{m_+}{|x|^{sp}},
$$
where $m_{\pm}$ are given in \eqref{H1}.

We will assume that there exists the limit function
$$
b(x)=\lim_{s\downarrow 0} b_s(x)\quad \text{a.e. }x\in\R^n.
$$

Our main result in the section is
\begin{thm}\label{MS}
Under the above assumptions and notations, if $u\in W^{s_0,p}(\R^n)$ for some $s_0\in (0,1)$, then
$$
\lim_{s\downarrow 0} sJ_{m,s}(u) = \int_{\R^n} |u|^p b(x)\, dx.
$$
\end{thm}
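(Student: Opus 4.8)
The plan is to adapt the Maz'ya--Shaposhnikova argument of \cite{MS} to the weight $m$. First I would reduce the statement to $u\in C_c^\infty(\R^n)$ by density. For this, note that if $0<s<s_0$ and $v\in W^{s_0,p}(\R^n)$, then splitting the integral defining $sJ_{m,s}(v)$ over $\{|h|\le1\}$ and $\{|h|>1\}$, using \eqref{H1} together with the elementary bounds $|h|^{-n-sp}\le|h|^{-n-s_0p}$ on the first set and $|v(x)-v(x-h)|^p\le2^{p-1}(|v(x)|^p+|v(x-h)|^p)$, $\int_{|h|>1}|h|^{-n-sp}\,dh=\tfrac{n\omega_n}{sp}$ on the second, one obtains
$$
s\,J_{m,s}(v)\le\frac{(1-s)m_+}{p}\Big(s\,[v]_{s_0,p}^p+\frac{2^pn\omega_n}{p}\,\|v\|_p^p\Big),
$$
so that $\limsup_{s\downarrow0}sJ_{m,s}(v)\le C(n,p)\,m_+\|v\|_p^p$. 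Since $w\mapsto\big(sJ_{m,s}(w)\big)^{1/p}$ is a seminorm (it is the $L^p$--norm, with respect to the measure $m(x,h)|h|^{-n-sp}\,dxdh$, of the linear map $w\mapsto w(x)-w(x-h)$) and, from the displayed bounds on $b_s$, the limit function $b$ lies in $L^\infty(\R^n)$, a routine triangle--inequality argument combined with the density of $C_c^\infty(\R^n)$ in $W^{s_0,p}(\R^n)$ reduces the theorem to the case $u\in C_c^\infty(\R^n)$.

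So fix $u\in C_c^\infty(\R^n)$ with $\supp u\subset B_R(0)$ and decompose $\iint_{\R^n\times\R^n}m(x,h)\frac{|u(x)-u(x-h)|^p}{|h|^{n+sp}}\,dxdh=\mathrm{I}_s+\mathrm{II}_s$, where $\mathrm{I}_s$ and $\mathrm{II}_s$ are the integrals over $\{|h|\le 2R\}$ and $\{|h|>2R\}$, respectively. In $\mathrm{I}_s$ the integrand is supported in $\{x\in B_{3R}(0)\}$ and $|u(x)-u(x-h)|\le\mathrm{Lip}(u)|h|$, hence $\mathrm{I}_s\le C(u,R)\int_0^{2R}r^{p(1-s)-1}\,dr=C(u,R)\tfrac{(2R)^{p(1-s)}}{p(1-s)}$, which remains bounded as $s\downarrow0$, so $s\,\mathrm{I}_s\to0$. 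In $\mathrm{II}_s$ the points $x$ and $x-h$ cannot both belong to $B_R(0)$, so $|u(x)-u(x-h)|^p=|u(x)|^p+|u(x-h)|^p$ there; splitting along these two terms, changing variables $y=x-h$ in the one carrying $|u(x-h)|^p$, using \eqref{H2} in the form $m(y+h,h)=m(y,-h)$ and then $h\mapsto-h$, the two contributions coincide, whence
$$
\mathrm{II}_s=2\int_{B_R(0)}|u(x)|^p\,G_s(x)\,dx,\qquad G_s(x):=\int_{|h|>2R}\frac{m(x,h)}{|h|^{n+sp}}\,dh.
$$

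To identify the limit I would compare the truncation at $2R$ with the one at $2|x|$ appearing in $b_s$. For $x\in B_R(0)$ one has $2|x|<2R$, and by \eqref{H1}
$$
0\le b_s(x)-2sG_s(x)=2s\int_{2|x|<|h|<2R}\frac{m(x,h)}{|h|^{n+sp}}\,dh\le\frac{2m_+n\omega_n}{p}\big((2|x|)^{-sp}-(2R)^{-sp}\big),
$$
which tends to $0$ as $s\downarrow0$ for a.e.\ $x$; together with the standing hypothesis $b_s\to b$ this gives $2sG_s(x)\to b(x)$ a.e.\ in $B_R(0)$. As $0\le 2sG_s(x)\le\tfrac{2m_+n\omega_n}{p}(2R)^{-sp}$ is bounded uniformly for small $s$ and $u$ is bounded with compact support, dominated convergence applies; collecting the contributions of $\mathrm{I}_s$ and $\mathrm{II}_s$ one arrives at $\lim_{s\downarrow0}sJ_{m,s}(u)=\int_{\R^n}|u|^p b(x)\,dx$, which together with the reduction above proves the theorem.

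The main obstacle is the manipulation of the far part $\mathrm{II}_s$: one needs the disjointness of the supports of $u(\cdot)$ and $u(\cdot-h)$ for $|h|>2R$ to linearize $|u(x)-u(x-h)|^p$, the structural assumption \eqref{H2} to recombine the two resulting integrals into a single $|u|^p$--weighted term, and --- the genuinely delicate point --- the replacement of the support--adapted truncation at $|h|=2R$ by the geometric truncation at $|h|=2|x|$ built into $b_s$, for which one must show both that the error vanishes pointwise a.e.\ and that it is dominated as $s\downarrow0$. The density step, by contrast, is routine, the only care being that the uniform estimate is valid for $s<s_0$, which is all that is needed since $s\downarrow0$.
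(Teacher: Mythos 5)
Your argument is correct, but it follows a genuinely different route from the paper's. The paper never reduces to smooth compactly supported functions: it proves a liminf bound (Lemma \ref{liminfMS}) and a limsup bound (Lemma \ref{limsupMS}) directly for $u\in W^{s_0,p}(\R^n)$, splitting at the Maz'ya--Shaposhnikova scale $|h|\ge 2|x|$ (resp.\ $|x-h|\ge 2|x|$), using \eqref{H2} to identify the two far-field contributions, $\ve$-weighted Young-type inequalities, the fractional Hardy inequality to control $\int|u|^p|x|^{-sp}\,dx$, Fatou's lemma for the liminf and, for the limsup, a further splitting with auxiliary parameters $\tau>s$ and $N$. You instead make a density reduction and then, for $u\in C^\infty_c$ supported in $B_R$, the far region $|h|>2R$ linearizes exactly because the supports of $u(\cdot)$ and $u(\cdot-h)$ are disjoint, \eqref{H2} merges the two resulting terms just as in the paper, and the only remaining analytic point is the comparison between the truncation at $|h|=2R$ and the truncation at $|h|=2|x|$ built into $b_s$, which you estimate correctly and which vanishes after multiplication by $s$; the near part is $O(1)$ and is likewise killed by the factor $s$. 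The price is the approximation step, which is legitimate here: $(sJ_{m,s}(\cdot))^{1/p}$ is a seminorm, $b\in L^\infty$ by the bounds that follow from \eqref{H1}, and your uniform estimate for $0<s<s_0$ is valid, so the triangle-inequality argument closes. What each route buys: the paper's argument treats general $u$ at once (its liminf lemma is a Hardy-type inequality of independent interest) but needs Hardy's inequality and the $\ve$-tricks, while yours is more elementary on smooth functions and transfers the work to the routine density step. One shared cosmetic point: like the paper's own proof of the two lemmas, you do not carry the prefactor $\tfrac{1-s}{p}$ from the definition of $J_{m,s}$ through the computation, so both arguments in fact identify the limit of $s$ times the plain double integral; this is a normalization inconsistency in the statement shared with the paper, not a gap in your reasoning.
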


The proof of this result follows the general strategy developed in \cite{MS} but also applies some ideas from \cite{Cianchi}.

The proof will be a direct consequence of the next two lemmas. The first one is a Hardy-type inequality with weights
\begin{lema}\label{liminfMS}
Let $u\in W^{s_0, p}(\R^n)$ for some $s_0\in (0,1)$, then
$$
\liminf_{s\downarrow 0} sJ_{m,s}(u) \ge \int_{\R^n} |u|^p b(x)\, dx.
$$
\end{lema}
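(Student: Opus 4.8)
\textbf{Proof proposal for Lemma \ref{liminfMS}.}

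The plan is to reduce the $\liminf$ inequality to the behavior of the weights $b_s$ by discarding the contribution of the region $|h| < 2|x|$ and keeping only a controlled piece of the region $|h| \ge 2|x|$. First I would observe that when $|h| \ge 2|x|$ we have the elementary lower bound $|u(x)-u(x-h)| \ge |u(x)| - |u(x-h)|$, but more usefully, integrating in $x$ over the set where $|x-h|$ is large forces $u(x-h)$ to be small on average. Concretely, fix $R>0$ and restrict the integration to $x \in B_R(0)$ and $|h| \ge 2R$ (so that automatically $|h| \ge 2|x|$); on this region the triangle inequality gives, for any $\lambda \in (0,1)$,
$$
|u(x)-u(x-h)|^p \ge (1-\lambda)|u(x)|^p - C_\lambda |u(x-h)|^p,
$$
with $C_\lambda = (1-\lambda)/\lambda^{p-1}$ or a similar explicit constant coming from the convexity of $t\mapsto |t|^p$. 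Plugging this in and using hypothesis \eqref{H1} to bound $m$ from below, the main term reproduces essentially $\int_{B_R} |u(x)|^p b_s(x)\,dx$ (up to replacing the lower limit $2|x|$ by $2R$, which only costs a term that vanishes uniformly as $s\downarrow 0$ once $R$ is fixed, because $s\int_{2R}^\infty r^{-sp-1}\,dr = \tfrac{1}{p}(2R)^{-sp}$ stays bounded and the difference of the two truncation levels contributes $\int_{2|x|}^{2R}$, a bounded-domain integral multiplied by $s\to 0$).

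Next I would handle the error term $s\cdot C_\lambda \iint m(x,h)|u(x-h)|^p |h|^{-n-sp}\,dx\,dh$ over $x\in B_R$, $|h|\ge 2R$. After the change of variables $y = x-h$ this becomes $s\, C_\lambda \int_{B_R}\!\int_{\{|x-y|\ge 2R\}} m(x,x-y)|u(y)|^p |x-y|^{-n-sp}\,dy\,dx$; bounding $m\le m_+$ and $|x-y|^{-n-sp}\le (2R)^{-n-sp}|x-y|^{-sp}\cdot$(something)—more carefully, splitting $\{|x-y|\ge 2R\}$ into a bounded annulus and the far region—one sees this term is $O(s)\|u\|_p^p$, hence vanishes as $s\downarrow 0$. (This is where one uses only $u\in L^p$, not the fractional regularity.) Therefore
$$
\liminf_{s\downarrow 0} sJ_{m,s}(u) \ge (1-\lambda)\liminf_{s\downarrow 0}\int_{B_R}|u(x)|^p b_s(x)\,dx \ge (1-\lambda)\int_{B_R}|u(x)|^p b(x)\,dx,
$$
the last step by Fatou's lemma together with the pointwise convergence $b_s(x)\to b(x)$ and the uniform lower bound $b_s(x)\ge \tfrac{2^{1-sp}n\omega_n}{p}\,m_-|x|^{-sp}$, which for $x\in B_R$ is bounded below by a positive constant times $R^{-sp}$, keeping Fatou applicable. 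Finally, letting $\lambda\downarrow 0$ and then $R\uparrow\infty$ (monotone convergence in $R$ on the right-hand side) yields $\liminf_{s\downarrow 0} sJ_{m,s}(u)\ge \int_{\R^n}|u|^p b(x)\,dx$.

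The main obstacle I anticipate is the bookkeeping in the first step: one must choose the truncation $|h|\ge 2R$ (rather than the natural $|h|\ge 2|x|$ that appears in the definition of $b_s$) so that the convexity splitting is clean, and then argue that replacing $2R$ by $2|x|$ in the radial integral is harmless — this requires that $s\int_{2|x|}^{2R} m(x,r\omega) r^{-sp-1}\,dr$ tends to $0$ as $s\downarrow 0$ for a.e. fixed $x$, which holds because the integrand is integrable on the finite interval $[2|x|,2R]$ (recall $m$ is bounded) and the prefactor $s\to 0$; dominating this to pass to the limit under the $x$-integral over $B_R$ is the one place where the hypothesis $u\in W^{s_0,p}$ for some $s_0>0$ could conceivably be invoked, although as sketched above an $L^p$ bound on $u$ combined with $m\in L^\infty$ seems to suffice for this particular half of the theorem; the role of the fractional regularity is presumably reserved for the matching $\limsup$ bound. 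A secondary technical point is ensuring all constants ($C_\lambda$, the factors $2^{1-sp}$, etc.) are uniform for $s$ near $0$, which is immediate since $2^{-sp}\in[2^{-p},1]$ there.
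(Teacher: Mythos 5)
There is a genuine gap: your lower bound loses a factor of $2$ relative to the weight $b_s$, and the missing ingredient is precisely the symmetrization step that the paper performs using \eqref{H2}. Recall that $b_s(x)=2s\int_{\Sn}\int_{2|x|}^\infty m(x,r\omega)\,r^{-sp-1}\,dr\,d\Hn$ carries a factor $2$. Your argument keeps the region $\{x\in B_R,\ |h|\ge 2R\}$ exactly once and discards everything else, so after polar coordinates the main term you produce is
$$
(1-\lambda)\,s\int_{B_R}|u(x)|^p\Bigl(\int_{\Sn}\int_{2R}^{\infty} m(x,r\omega)\,r^{-sp-1}\,dr\,d\Hn\Bigr)dx\ \approx\ \frac{1-\lambda}{2}\int_{B_R}|u(x)|^p\,b_s(x)\,dx,
$$
i.e.\ one half of what you claim it ``essentially reproduces''. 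Carried to the end, your scheme only yields $\liminf_{s\downarrow0}sJ_{m,s}(u)\ge\tfrac12\int_{\R^n}|u|^p b\,dx$, which is strictly weaker than the statement. The paper recovers the factor $2$ by noting that, thanks to \eqref{H2} and the change of variables $(x,h)\mapsto(x-h,-h)$, the integral over $\{|h|\ge2|x|\}$ equals the integral over $\{|h|\ge2|x-h|\}$, and these regions are essentially disjoint, so the full double integral dominates twice the one-sided one. Your truncated version can be repaired the same way: $\{x\in B_R,\ |h|\ge2R\}$ and its image $\{x-h\in B_R,\ |h|\ge2R\}$ under that change of variables are disjoint (if both held, then $|h|\le|x|+|x-h|<2R$), so $J_{m,s}(u)$ controls twice your kept piece. (You should also track the prefactor $\tfrac{1-s}{p}$ in the definition of $J_{m,s}$, although the paper's own proof is cavalier about it.)

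Apart from this, your route genuinely differs from the paper's and the difference is worth recording. The paper works on the full region $\{|h|\ge2|x|\}$; its error term with $|u(x-h)|^p$ becomes, after a change of variables, a weighted integral $\int_{\R^n}|u(y)|^p|y|^{-sp}\,dy$, whose finiteness (uniformly for small $s$) rests on the fractional Hardy inequality and hence on the hypothesis $u\in W^{s_0,p}(\R^n)$. Your truncation to $x\in B_R$, $|h|\ge 2R$ keeps the kernel bounded, $|h|^{-n-sp}\le(2R)^{-n-sp}$, so the error is at most $s\,C_\lambda m_+|B_R|(2R)^{-n-sp}\|u\|_p^p=O(s)$ using only $u\in L^p(\R^n)$; that part of your sketch is correct, provided you argue via this trivial pointwise bound on the kernel --- if instead you integrate the kernel in $h$ first you get $\tfrac{n\omega_n}{sp}(2R)^{-sp}$, the $s^{-1}$ cancels the prefactor, and nothing vanishes. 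Similarly, replacing the truncation level $2R$ by $2|x|$ is only a pointwise statement (near $x=0$ the difference $(2|x|)^{-sp}-(2R)^{-sp}$ is not uniformly small), but since you conclude by Fatou's lemma and nonnegativity, pointwise a.e.\ convergence suffices; the ``uniform'' claim should be dropped. With the symmetrization inserted, your argument closes and in fact requires less than the paper's proof of this lemma (no Hardy inequality, only $u\in L^p$), the fractional regularity then being needed only for the limsup half.
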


\begin{proof}
Let us call
$$
I^p=\int_{\R^n}\int_{|h|>2|x|} m(x,h) \frac{|u(x)|^p}{|h|^{n+sp}} \,dh dx.
$$
Then
\begin{align*}
I^p &=  \int_{\R^n}\left( \int_{|h|\geq 2|x|} \frac{m(x,h)}{|h|^{n+sp}}dh\right)|u(x)|^p\,dx \\
&=  \int_{\R^n} \left( \int_{\Sn} \int_{2|x|}^\infty \frac{m(x,r\omega)}{r^{1+sp}} \,dr d\Hn \right) |u(x)|^p \,dx.
\end{align*}
Now, for any $\ve>0$,
\begin{align*}
I^p  \leq &  (1+\ve)^{p-1} \int_{\R^n}\int_{|h|\geq 2|x|} m(x,h)\frac{|u(x)-u(x-h)|^p}{|h|^{n+sp}}\,dhdx\\
& + \left(\frac{1+\ve}{\ve}\right)^{p-1} \int_{\R^n}\int_{|h|\geq 2|x|} m(x,h)\frac{|u(x-h)|^p}{|h|^{n+sp}}\, dhdx\\
:= & (1+\ve)^{p-1} (a)^p + \left(\frac{1+\ve}{\ve}\right)^{p-1}  (b)^p.
\end{align*}

Let us first bound $(b)$. As $|h|>2|x|$ we have that $\frac23|x-h|<|h|<2|x-h|$. Hence
\begin{align*}
(b)^p &= \int_{\R^n} \int_{|h|\geq 2|x|} \frac{|u(x-h)|^p}{|h|^{n+sp}}\,dhdx\\
&\leq 
\left(\frac32\right)^{n+sp}\int_{\R^n} \int_{\frac23|y|<|x-y|<2|y|} \frac{|u(y)|^p}{|y|^{n+sp}}\,dydx\\
&= \left(\frac32\right)^{n+sp}\int_{\R^n}  \frac{|u(y)|^p}{|y|^{n+sp}}\left( \int_{\frac23|y|<|x-y|<2|y|} \,dx\right) dy\\
&\le \left(\frac32\right)^{n+sp}\int_{\R^n}  \frac{|u(y)|^p}{|y|^{n+sp}} n\omega_n 2^n|y|^n dy\\
&= n\omega_n \frac{3^{n+sp}}{2^{sp}} \int_{\R^n}  \frac{|u(y)|^p}{|y|^{sp}} dy.
\end{align*}
Observe that this last quantity is finite by Hardy's inequality.

For $(a)$, we observe that, changing variables,
\begin{align*}
(a)^p&=\int_{\R^n}\int_{|h|\geq 2|x|} m(x,h) \frac{|u(x)-u(x-h)|^p}{|h|^{n+sp}}\,dhdx\\
&= \int_{\R^n}\int_{|h|\geq 2|x-h|} m(x,h)\frac{|u(x)-u(x-h)|^p}{|h|^{n+sp}}\,dhdx:= (\tilde a)^p,
\end{align*}
where we have used the symmetry assumption \eqref{H2}. 

Observe that the sets $\{|h|\geq 2|x-h|\}$ and $\{|h|\geq 2|x|\}$ are disjoints, so
$$
2(a)^p = (a)^p + (\tilde a)^p  \leq  \iint_{\R^n\times\R^n} m(x,h) \frac{|u(x)-u(x-h)|^p}{|h|^{n+sp}}\,dhdx.
$$

Next, observe that
\begin{align*}
s J_{m,s}(u)  &\geq 2s(a)^p \ge 2s\left[ \frac{1}{(1+\ve)^{p-1}} I^p - \frac{1}{\ve^{p-1}}  (b)^p\right] \\
&\ge \frac{1}{(1+\ve)^{p-1}} \int_{\R^n} |u(x)|^p b_s(x)\,dx -  \frac{2s}{\ve^{p-1}}n\omega_n \frac{3^{n+sp}}{2^{sp}} \int_{\R^n}  \frac{|u(y)|^p}{|y|^{sp}} dy.
\end{align*}

Finally, using Fatou's Lemma, 
$$
\liminf_{s\downarrow 0} sJ_{m,s}(u) \geq \frac{1}{(1+\ve)^{p-1}} \int_{\R^n} |u(x)|^p b(x)\,dx,
$$
for any $\ve>0$ and the result follows.
\end{proof}

The next lemma gives us the upper estimate.
\begin{lema}[Limsup estimate]\label{limsupMS}
For any $u\in W_0^{s_0,p}(\R^n)$ for some $s_0\in (0,1)$, it holds that
\begin{equation}\label{limsup}
    \limsup_{s\downarrow 0}s J_{m,s}(u) \leq  \int_{\R^n} |u(x)|^p b(x)\, dx.
\end{equation}
\end{lema}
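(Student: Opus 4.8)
The plan is to prove the upper bound \eqref{limsup} first for a dense class of functions and then pass to the general case by approximation, mimicking the strategy of Maz'ya--Shaposhnikova \cite{MS} but keeping track of the anisotropic weight $m$. Concretely, I would first establish the estimate for $u\in C_c^\infty(\R^n)$, say with $\supp u\subset B_R(0)$, and then use the density of $C_c^\infty(\R^n)$ in $W^{s_0,p}(\R^n)$ together with the elementary bound $s[u]_{s,p}^p\le C\big(s[u-\varphi]_{s,p}^p\big)^{\text{?}}+\dots$; more precisely one splits $u=\varphi+(u-\varphi)$, uses the $p$-triangle inequality $sJ_{m,s}(u)\le (1+\delta)sJ_{m,s}(\varphi)+C_\delta\, sJ_{m,s}(u-\varphi)$, controls $sJ_{m,s}(u-\varphi)$ uniformly in $s$ by $\|u-\varphi\|_{s_0,p}^p$ via Theorem~\ref{teo.s1.s2}-type monotonicity together with the uniform bound $sJ_{m,s}(w)\le C\|w\|_{s_0,p}^p$ for $s<s_0$ (which follows by the same interpolation argument used in Section~\ref{S3}, now in the regime $s\downarrow 0$), and similarly compares the right-hand sides $\int|u|^p b$ and $\int|\varphi|^p b$ using the pointwise bound $b(x)\le C|x|^{-sp}\to C$ combined with Hardy's inequality.

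For the core case $u\in C_c^\infty(\R^n)$ with $\supp u\subset B_R$, I would split the double integral defining $sJ_{m,s}(u)$ according to whether $|h|\le 2|x|$ or $|h|>2|x|$. On the region $|h|>2|x|$ one has the reverse of the manipulation in Lemma~\ref{liminfMS}: writing $|u(x)-u(x-h)|^p\le (1+\ve)^{p-1}|u(x)|^p+C_\ve|u(x-h)|^p$, the first term produces exactly $\int_{\R^n}|u(x)|^p b_s(x)\,dx$ (up to the factor $(1+\ve)^{p-1}$ and the constant $\tfrac2p$ absorbed in the definition of $b_s$), and the second term is an ``off-diagonal'' term that, by the same change of variables $y=x-h$ as in the proof of Lemma~\ref{liminfMS}, is bounded by $C s\int_{\R^n}|u(y)|^p|y|^{-sp}\,dy$, which tends to $0$ as $s\downarrow 0$ (the integral stays bounded by Hardy's inequality and the prefactor $s$ kills it — this uses $u$ compactly supported so the integral near infinity is harmless; near the origin $|y|^{-sp}$ is integrable against $|u|^p$ for $u$ smooth). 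So on $\{|h|>2|x|\}$ one gets $\le (1+\ve)^{p-1}\int|u|^p b_s + o(1)$, and then $b_s\to b$ pointwise with the uniform bound $b_s(x)\le C|x|^{-sp}$; since $u$ is compactly supported and smooth, dominated convergence (with majorant $C\,|u(x)|^p(|x|^{-s_0 p}+1)\in L^1$ for $s<s_0$) gives $\limsup_s s\!\int|u|^p b_s \le \int|u|^p b$.

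On the complementary region $|h|\le 2|x|$ one must show the contribution vanishes as $s\downarrow 0$. Here I would use $m\le m_+$ and the Lipschitz bound $|u(x)-u(x-h)|\le \|\nabla u\|_\infty |h|$ for $|h|\le 1$, together with the crude bound $|u(x)-u(x-h)|^p\le 2^{p-1}(|u(x)|^p+|u(x-h)|^p)\le C\|u\|_\infty^p\chi_{B_{3R}}$ for $|h|\le 2|x|$ (note that if $|h|\le 2|x|$ and both $u(x)\neq 0$ or $u(x-h)\neq 0$ then $|x|\le R$ or $|x|\le 3R$). Splitting further into $|h|\le 1$ and $1<|h|\le 2|x|$: the first piece is $\le C s\|\nabla u\|_\infty^p\int_{B_{3R}}\int_{|h|\le 1}|h|^{p-n-sp}\,dh\,dx = C s\,\tfrac{1}{p(1-s)}\cdot|B_{3R}|\to 0$; the second piece is $\le C s\|u\|_\infty^p\int_{B_{3R}}\int_{1<|h|\le 2|x|}|h|^{-n-sp}\,dh\,dx$, and the inner integral is $\le \tfrac{n\omega_n}{sp}(1-(2|x|)^{-sp})\le C$ (bounded uniformly — more carefully it is $O(1)$ for $|x|$ bounded away from $0$ and $O(\log)$ near where $2|x|\sim1$, in any case the factor $s$ in front ensures the whole term is $o(1)$). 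The main obstacle I anticipate is the bookkeeping in this $|h|\le 2|x|$ region — one must be a bit careful that the $\tfrac1s$ divergences from $\int|h|^{-n-sp}$ are exactly compensated by the prefactor $s$ and not made worse by the $x$-integration over an unbounded region (which is why compact support of $u$ is essential), and in the density step one must verify that the uniform-in-$s$ estimate $sJ_{m,s}(w)\le C\|w\|_{s_0,p}^p$ really holds for $0<s<s_0$, which is the $s\downarrow 0$ analogue of Theorem~\ref{teo.s1.s2} and should follow from an identical application of Lemma~\ref{lema.bbm}.
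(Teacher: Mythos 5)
Your analysis of the region $\{|h|>2|x|\}$ (Young's inequality with $(1+\ve)$, Hardy's inequality for the off-diagonal term, dominated convergence to pass from $b_s$ to $b$) matches the paper, but the decomposition as a whole has a genuine gap: you never invoke the symmetry hypothesis \eqref{H2}, and without it the bookkeeping cannot close. Recall $b_s(x)=2s\int_{\{|h|\ge 2|x|\}} m(x,h)|h|^{-n-sp}\,dh$ carries a factor $2$; the term $|u(x)|^p$ integrated over $\{|h|>2|x|\}$ produces only $\tfrac12\int |u|^p b_s\,dx$ (up to the harmless prefactor), so your main term accounts for only half of the limit. The missing half sits exactly in the region $\{|h|\le 2|x|\}$ that you claim is $o(1)$: for $u$ supported in $B_R$, take $|x|\ge R$ and $x-h\in B_R$; then $|h|\le |x|+R\le 2|x|$, so these pairs lie in your ``bad'' region, and their contribution is of order $s\int_{|x|\ge 2R}\int_{\{x-h\in B_R\}} |u(x-h)|^p |h|^{-n-sp}\,dh\,dx \ge c\, s\,\|u\|_p^p \int_{|x|\ge 2R}|x|^{-n-sp}\,dx$, which tends to a positive multiple of $\|u\|_p^p$ as $s\downarrow 0$, not to zero. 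Concretely, your supporting claim ``if $|h|\le 2|x|$ and $u(x)\ne 0$ or $u(x-h)\ne 0$ then $|x|\le 3R$'' is false (take $|x|$ arbitrarily large and $h=x$, so $x-h=0\in\supp u$ and $|h|\le 2|x|$). If your estimate for that region were correct, you would conclude $\limsup_{s\downarrow 0} sJ_{m,s}(u)\le \tfrac12\int|u|^p b\,dx$, contradicting Lemma \ref{liminfMS}; so the step is not merely unproven, it is wrong, and for general anisotropic $m$ the far-away mass with $x\notin\supp u$, $x-h\in\supp u$ can only be folded back into the $b_s$-term through $m(x,h)=m(x-h,-h)$.

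The paper's proof avoids this by first using \eqref{H2} to write the whole double integral as twice the integral over $\{|x-h|\ge |x|\}$ --- this symmetrization is exactly where the factor $2$ in $b_s$ comes from --- and only then splitting into the far region $\{|x-h|\ge 2|x|\}$ (which yields the full $\int|u|^p b_s$ plus a Hardy-controlled remainder) and the intermediate annulus $\{|x|<|x-h|<2|x|\}$, whose contribution is shown to vanish by an $N$-truncation: the part with $|h|\le N$ is bounded by $N^{p(\tau-s)}[u]_{\tau,p}^p$ with $\tau=s_0$ fixed, and the part with $|h|>N$ by $\tfrac{C}{spN^{sp}}\int_{|x|>N/3}|u|^p\,dx$, which is small for $N$ large. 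Note also that this argument works directly for every $u\in W^{s_0,p}(\R^n)$, so your preliminary reduction to $C_c^\infty$ by density (with the uniform bound $sJ_{m,s}(w)\le C\|w\|_{s_0,p}^p$ for $s<s_0$) is unnecessary machinery; in any case it could only transfer an estimate for smooth functions whose core step is the one that fails.
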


\begin{proof}
Observe that our symmetry assumption \eqref{H2} gives that
\begin{align*}
\int_{\R^n}\int_{|x-h|< |x|}  m(x,h) & \frac{|u(x)-u(x-h)|^p}{|h|^{n+sp}}\,dhdx \\
& =  \int_{\R^n}\int_{|x-h|>  |x|} m(x-h,-h)  \frac{|u(x)-u(x-h)|^p}{|h|^{n+sp}}\,dhdx \\
&=  \int_{\R^n}\int_{|x-h|>  |x|}  m(x,h)  \frac{|u(x)-u(x-h)|^p}{|h|^{n+sp}}\,dhdx.
\end{align*}
Therefore
\begin{align*}
\iint_{\R^n\times\R^n} m(x,h)&\frac{|u(x)-u(x-h)|^p}{|h|^{n+sp}}\, dhdx \\
& = 2 \int_{\R^n}\int_{|x-h|\geq |x|} m(x,h) \frac{|u(x)-u(x-h)|^p}{|h|^{n+sp}}\,dh dx.
\end{align*}
Arguing as in the previous lemma, given $\ve>0$ we get
\begin{align*}
sJ_{m,s}(u) =&  2s \left(\int_{\R^n}\int_{|x-h|\geq 2|x|}  + \int_{\R^n}\int_{|x|<|x-h|<2|x|}\right) m(x,h)\frac{|u(x)-u(x-h)|^p}{|h|^{n+sp}}\, dhdx \\
\leq &(1+\ve)^{p-1} \int_{\R^n} |u(x)|^p b_s(x)\, dx \\
& +2sm^+\left[ \left(\frac{1+\ve}{\ve}\right)^{p-1}\int_{\R^n}\int_{|x-h|\geq 2|x|} \frac{|u(x-h)|^p}{|h|^{n+sp}}\, dhdx\right.\\
&\qquad\qquad\left. + \int_{\R^n}\int_{|x|<|x-h|<2|x|} \frac{|u(x)-u(x-h)|^p}{|h|^{n+sp}}\, dhdx \right]\\
&:= (a)^p + 2sm^+ [(b)^p +(c)^p].
\end{align*}

We need to get uniform, in $s$, bounds on $(b)$ and $(c)$. For $(b)$, we use that $|x-h|\ge 2|x|$ implies that $|h|>\frac{1}{2}|x-h|$ together with Fubini's Theorem to obtain
\begin{align*}
(b)^p & \le  2^{n+sp} \int_{\R^n}\left(\int_{|x-h|\ge |x|} \frac{|u(x-h)|^p}{|x-h|^{n+sp}}\, dh\right)dx\\
&= 2^{sp}  n\omega_n \int_{\R^n}  \frac{|u(x)|^p}{|x|^{sp}}  \,dx <\infty.
\end{align*}
Hardy's inequality gives us the desired uniform bound on $(b)$ and therefore
$$
\limsup_{s\to 0^+}\; s(b)^p =  0.
$$

It remains to bound $(c)$. To begin with, we split the integral into two parts, one where $|h|$ is large and another one where $|h|$ is bounded.
\begin{align*}
(c)^p = &  \int_{\R^n}\int_{|x|<|x-h|<2|x| \atop |h|\leq N} \frac{|u(x)-u(x-h)|^p}{|h|^{n+sp}}\, dhdx \\
&+   \int_{\R^n}\int_{|x|<|x-h|<2|x| \atop |h|> N} \frac{|u(x)-u(x-h)|^p}{|h|^{n+sp}}\, dhdx\\
:= & (c_1)^p + (c_2)^p.
\end{align*}

To bound $(c_1)$ we proceed as follows
\begin{align*}
(c_1)^p &\leq  N^{p(\tau-s)} \int_{\R^n}\int_{|x|<|x-h|<2|x| \atop |h|\leq N} \frac{|u(x)-u(x-h)|^p  }{|h|^{n+\tau p}} \, dhdx\\
&\leq  N^{p(\tau-s)} [u]_{\tau, p}^p,
\end{align*}
where $\tau>s$ is fixed. From this expression, 
$$
\limsup_{s\to 0^+}\; s (c_1)^p =  0.
$$

It remains to get a bound for $(c_2)$. First, we observe that, as $|x|<|x-h|<2|x|$ and $|h|>N$, it follows that $|x|>\frac{N}{3}$ and $|x-h|>\frac{N}{3}$. Hence
\begin{align*}
(c_2)^p&\leq 
2^{p-1} \int_{\R^n}\int_{|x|<|x-h|<2|x| \atop |h|> N} \frac{|u(x)|^p}{|h|^{n+sp}}\, dhdx + 2^{p-1} \int_{\R^n}\int_{|x|<|x-h|<2|x| \atop |h|> N} \frac{|u(x-h)|^p}{|h|^{n+sp}}\, dhdx\\
&\leq  2^{p-1} \int_{\R^n}\int_{|x|>N/3 \atop |h|> N}  \frac{|u(x)|^p}{|h|^{n+sp}}\, dydx + 2^{p-1} \int_{\R^n}\int_{|x-h|>N/3 \atop |h|> N}\frac{|u(x-h)|^p}{|h|^{n+sp}}\, dhdx\\
&= 2^{p}  \int_{\R^n}\int_{|x|>N/3 \atop |h|> N}  \frac{|u(x)|^p}{|h|^{n+sp}}\, dydx\\
&= 2^{p} \int_{|x|>\frac{N}{3}} |u(x)|^p \left(n\omega_n\int_N^\infty  \frac{r^{n-1}}{r^{n+sp}}\,dr \right)dx\\
&= \frac{n\omega_n 2^{p} }{sp} \frac{1}{N^{sp}}   \int_{|x|>\frac{N}{3}} |u(x)|^p dx
\end{align*}
from where it follows that
$$
\limsup_{s\to 0+} s(c_s)^p \le \frac{n\omega_n 2^p}{p} \int_{|x|>\frac{N}{3}} |u(x)|^p dx,
$$
and this quantity is arbitrary small if $N$ is large.
\end{proof}

With the help of Lemmas \ref{liminfMS} and \ref{limsupMS} we can deduce the main result of the section.
\begin{proof}[Proof of Theorem \ref{MS}]
Immediate from Lemmas \ref{liminfMS} and \ref{limsupMS}.
\end{proof}

\section{Anisotropic nonlocal and local problems}\label{S5}

One application of the results in Section \ref{S3} is to analyze the asymptotic behavior of the solutions to anisotropic nonlocal problems. That is, given a domain $\Omega\subset \R^n$ and a source term $f\in L^{p'}(\Omega)$ one wants to analyze the limit as $s\uparrow 1$ of the solutions to
\begin{equation}\label{nonlocal}
\begin{cases}
\LL_{m,s} u_s = f & \text{in }\Omega\\
u_s=0 & \text{in } \R^n\setminus \Omega,
\end{cases}
\end{equation}
where $\LL_{m,s}$ is the Fr\'echet derivative of $J_{m,n}$ given by \eqref{Lm}.

In the case where $m=1$ this problem is well understood since the seminal works of \cite{BBM} and for some recent results regarding this problem, even in the {\em semilinear-type} case (that is when $f=f(u)$) we refer to \cite{FBS}.

In this general case, the results in Section \ref{S3}, suggest that the limit problem for \eqref{nonlocal} when $s\uparrow 1$ is 
\begin{equation}\label{local}
\begin{cases}
\LL_\A u = f & \text{in }\Omega\\
u=0 & \text{on }\partial\Omega,
\end{cases}
\end{equation}
where $\LL_\A u = -\diver(\nabla_\xi \A(x,\nabla u))$.

Recall that $\LL_\A$ is the Fr\'echet derivative of the functional
$$
J(u) = \int_{\R^n} \A(x,\nabla u)\, dx.
$$

The results of Section \ref{S3}, immediately gives:
\begin{thm}\label{gamma}
Assume that $m$ verifies \eqref{H1}--\eqref{H3} and let $\A$ be defined by \eqref{coef}. Define the functionals $J_{m,s}, J\colon L^p(\Omega)\to \bar\R$ as
\begin{align*}
&J_{m,s}(u) = \begin{cases}
\displaystyle\frac{1-s}{p}\iint_{\R^n\times\R^n} m(x,h) \frac{|u(x)-u(x-h)|^p}{|h|^{n+sp}}\, dhdx & \text{if } u\in W^{s,p}_0(\Omega)\\
\infty & \text{else}
\end{cases}\\
&J(u) = \begin{cases}
 \displaystyle\int_{\R^n} \A(x,\nabla u)\, dx & \text{if } u\in W^{1,p}_0(\Omega)\\
 \infty & \text{else.}
\end{cases}
\end{align*}
Then $J_{m,s}$ Gamma-converges to $J$ as $s\uparrow 1$.
\end{thm}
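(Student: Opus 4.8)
The plan is to verify the two defining conditions of $\Gamma$-convergence in $L^p(\Omega)$: (i) the \emph{liminf inequality}, that $J(u)\le\liminf_{k\to\infty}J_{m,s_k}(u_k)$ whenever $s_k\uparrow 1$ and $u_k\to u$ in $L^p(\Omega)$; and (ii) the \emph{existence of a recovery sequence}, that for every $u\in L^p(\Omega)$ and every $s_k\uparrow 1$ there are $u_k\to u$ in $L^p(\Omega)$ with $\limsup_{k\to\infty}J_{m,s_k}(u_k)\le J(u)$. Throughout, functions in $W^{s,p}_0(\Omega)$ (resp.\ $W^{1,p}_0(\Omega)$) are identified with their zero extensions to $\R^n$, and we use that $\A(x,0)=0$ by \eqref{coef}, so that $\int_{\R^n}\A(x,\nabla v)\,dx=\int_\Omega\A(x,\nabla v)\,dx=J(v)$ for every $v\in W^{1,p}_0(\Omega)$.

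For (ii) I would simply take the constant sequence $u_k\equiv u$. If $u\notin W^{1,p}_0(\Omega)$ then $J(u)=\infty$ and there is nothing to prove. If $u\in W^{1,p}_0(\Omega)$, then its zero extension lies in $W^{1,p}(\R^n)$ and hence $u\in W^{s,p}_0(\Omega)$ for every $s\in(0,1)$ by the continuous embedding $W^{1,p}(\R^n)\hookrightarrow W^{s,p}(\R^n)$ (equivalently, by Lemma~\ref{lema.cota}); consequently $J_{m,s_k}(u)=\frac{1-s_k}{p}\iint_{\R^n\times\R^n}m(x,h)\frac{|u(x)-u(x-h)|^p}{|h|^{n+s_kp}}\,dhdx$, and Theorem~\ref{teo.bbm} shows this converges to $\int_{\R^n}\A(x,\nabla u)\,dx=J(u)$. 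Thus (ii) holds, in fact with an equality rather than just a $\limsup$ bound.

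For (i), let $s_k\uparrow 1$ and $u_k\to u$ in $L^p(\Omega)$, and set $\ell:=\liminf_{k\to\infty}J_{m,s_k}(u_k)$. If $\ell=\infty$ there is nothing to prove, so assume $\ell<\infty$ and pass to a subsequence (not relabeled) along which $J_{m,s_k}(u_k)\to\ell$ and stays bounded. In particular each such $u_k$ lies in $W^{s_k,p}_0(\Omega)$ (otherwise $J_{m,s_k}(u_k)=\infty$), so $u_k=0$ a.e.\ in $\R^n\setminus\Omega$, and from \eqref{H1} we get $\sup_k(1-s_k)[u_k]_{s_k,p}^p<\infty$; moreover $\sup_k\|u_k\|_p<\infty$ because $u_k\to u$ in $L^p$. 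Theorem~\ref{main2} then provides a further subsequence $u_{k_j}\to v$ in $L^p_\text{loc}(\R^n)$ with $v\in W^{1,p}(\R^n)$ and $\int_{\R^n}\A(x,\nabla v)\,dx\le\liminf_{j\to\infty}J_{m,s_{k_j}}(u_{k_j})=\ell$. Since $u_{k_j}\to u$ in $L^p(\Omega)$ and $u_{k_j}=0$ outside $\Omega$, we identify $v=u$ a.e.\ and $v=0$ a.e.\ in $\R^n\setminus\Omega$, hence $u=v\in W^{1,p}_0(\Omega)$ and $\int_{\R^n}\A(x,\nabla v)\,dx=J(u)$. Therefore $J(u)\le\ell$, which is (i), and the proof is complete.

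The substance of the argument is entirely contained in Theorems~\ref{teo.bbm} and \ref{main2}; the one point that requires a little care is the identification step in (i): one must check that the $L^p_\text{loc}$ limit produced by Theorem~\ref{main2} both coincides with $u$ and vanishes outside $\Omega$, so that it genuinely belongs to $W^{1,p}_0(\Omega)$ and the limiting integral is $J(u)$ rather than an integral over all of $\R^n$. This is where a mild regularity hypothesis on $\partial\Omega$ is implicitly used (so that a $W^{1,p}(\R^n)$ function vanishing outside $\Omega$ lies in $W^{1,p}_0(\Omega)$); absent such a hypothesis one simply reads $J$ as the functional that is finite precisely on $\{v\in W^{1,p}(\R^n):v=0 \text{ a.e.\ in } \R^n\setminus\Omega\}$, and the same proof applies verbatim.
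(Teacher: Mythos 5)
Your proof is correct and follows essentially the route the paper intends: the paper only remarks that the result is ``straightforward from Section \ref{S3}'' (with details as in \cite{FBS-JFA}), and that argument is exactly yours --- the constant recovery sequence handled by Theorem \ref{teo.bbm}, and the liminf inequality via the compactness and lower bound of Theorem \ref{main2}. Your closing caveat about identifying the limit as an element of $W^{1,p}_0(\Omega)$ (zero extension versus closure of test functions) is a fair observation on a point the paper also leaves implicit.
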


The definition and properties of Gamma-convergence can be seen in \cite{DalMaso}, The proof of Theorem \ref{gamma} is straightforward from Section \ref{S3} and the details are completely analogous as in \cite{FBS-JFA}.

The main feature of Gamma-convergence is that it implies the following result
\begin{thm}\label{minimos}
Let $J_{m,s}$ and $J$ be defined as in Theorem \ref{gamma}. Then, if $f\in L^{p'}(\Omega)$, there exists a unique minimum $u_s\in W^{s,p}_0(\Omega)$ of 
$$
J_{m,s}(v) - \int_\Omega fv\, dx
$$ 
a unique minimum $u\in W^{1,p}_0(\Omega)$ of 
$$
J(v) - \int_\Omega fv\, dx
$$ 
and $u_s\to u$ in $L^p(\Omega)$.
\end{thm}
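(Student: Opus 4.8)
The plan is to obtain the statement from the general theory of $\Gamma$-convergence, with Theorem \ref{gamma} as the main ingredient and Theorem \ref{main2} (hence Theorem \ref{teo.s1.s2}) supplying the compactness.

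I would begin with existence and uniqueness for each fixed parameter. Writing $\mathcal{F}_s(v):=J_{m,s}(v)-\int_\Omega f v\,dx$ on $W^{s,p}_0(\Omega)$, assumption \eqref{H1} gives $J_{m,s}(v)\ge\frac{(1-s)m_-}{p}[v]_{s,p}^p$, and together with the fractional Poincar\'e inequality on $\Omega$ and H\"older's and Young's inequalities this makes $\mathcal{F}_s$ coercive on $W^{s,p}_0(\Omega)$. The functional $J_{m,s}$ is convex, and by Fatou's lemma it is sequentially lower semicontinuous for strong $L^p(\Omega)$ convergence (the same computation as in the proof of Theorem \ref{main2}), hence weakly lower semicontinuous on the reflexive space $W^{s,p}_0(\Omega)$; the direct method then yields a minimizer $u_s$. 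Because $t\mapsto|t|^p$ is strictly convex (for $p>1$), $J_{m,s}$ is in fact strictly convex on $W^{s,p}_0(\Omega)$ --- if $[v_1-v_2]_{s,p}=0$ then $v_1-v_2$ is (a.e.) a constant, hence $\equiv 0$ since it vanishes outside $\Omega$ --- so $u_s$ is unique. The same reasoning applies to $\mathcal{F}(v):=J(v)-\int_\Omega f v\,dx$ on $W^{1,p}_0(\Omega)$: here $\A(x,\xi)\ge c_{p,n}m_-|\xi|^p$ by \eqref{coef} and the rotational invariance of $\xi\mapsto\int_{\Sn}|\xi\cdot w|^p\,d\Hn$, and $\xi\mapsto\A(x,\xi)$ is strictly convex because it is an average, with the positive weight $a\ge m_->0$, of the convex functions $|\xi\cdot w|^p$ over all directions $w\in\Sn$; this gives a unique minimizer $u\in W^{1,p}_0(\Omega)$.

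Next I would pass to the limit. Since $v\mapsto\int_\Omega f v\,dx$ is continuous for strong $L^p(\Omega)$ convergence, Theorem \ref{gamma} yields $\mathcal{F}_s\xrightarrow{\Gamma}\mathcal{F}$ in $L^p(\Omega)$ as $s\uparrow1$. I would then check that the family $\{\mathcal{F}_s\}$ is equicoercive in $L^p(\Omega)$: if $s_k\uparrow1$ and $\mathcal{F}_{s_k}(u_k)\le C$, then a fractional Poincar\'e inequality with constant uniform for $s$ near $1$, combined with Young's inequality, bounds $(1-s_k)[u_k]_{s_k,p}^p$ and $\|u_k\|_p$ uniformly in $k$, so Theorem \ref{main2} provides a subsequence converging in $L^p(\Omega)$ (here $\Omega$ is bounded). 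With $\Gamma$-convergence and equicoercivity in hand, the conclusion is standard (see \cite{DalMaso}): testing with $u$ itself --- which lies in every $W^{s,p}_0(\Omega)$ and satisfies $\mathcal{F}_s(u)\to\mathcal{F}(u)$ by Theorem \ref{teo.bbm} --- gives $\limsup_{s\uparrow1}\mathcal{F}_s(u_s)\le\mathcal{F}(u)=\min\mathcal{F}$; equicoercivity then extracts, from any sequence $s_k\uparrow1$, a subsequence along which $u_{s_k}\to\bar u$ in $L^p(\Omega)$, and the $\Gamma$-liminf inequality forces $\mathcal{F}(\bar u)\le\liminf\mathcal{F}_{s_k}(u_{s_k})\le\min\mathcal{F}$, so $\bar u$ is a minimizer, hence $\bar u=u$ by uniqueness. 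As the limit does not depend on the subsequence, $u_s\to u$ in $L^p(\Omega)$.

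The semicontinuity and coercivity bookkeeping is routine. The step I expect to be the real crux is the equicoercivity claim: converting an a priori bound on $\mathcal{F}_{s_k}(u_k)$ into a uniform fractional Sobolev bound strong enough to trigger the compactness of Theorem \ref{main2}, while working in the fixed $L^p(\Omega)$ topology in which Theorem \ref{gamma} is phrased rather than in the $s$-dependent spaces $W^{s,p}_0(\Omega)$. This is exactly where Theorem \ref{teo.s1.s2} enters, through Theorem \ref{main2}.
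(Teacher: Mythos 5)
Your argument is correct and takes essentially the same route as the paper, which proves Theorem \ref{minimos} by the standard fundamental theorem of $\Gamma$-convergence applied to Theorem \ref{gamma} (with the compactness coming from Theorems \ref{teo.s1.s2} and \ref{main2}), deferring the details to \cite{FBS-JFA}. The one ingredient you invoke without proof, the $s$-uniform fractional Poincar\'e inequality on the bounded domain $\Omega$, is standard and can itself be deduced by a contradiction argument from the compactness statement of Theorem \ref{main2}, so your outline closes with no genuine gap.
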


Again, the details of Theorem \ref{minimos} with the obvious modifications, can be found in \cite{FBS-JFA}.

As a corollary of Theorem \ref{minimos} we get the connection between the solution to \eqref{nonlocal} with the solution to \eqref{local}.
\begin{cor}
For each $s\in (0,1)$, there exists a unique solution $u_s\in W^{s,p}_0(\Omega)$ to \eqref{nonlocal}. This sequence of solutions $\{u_s\}_{s\in (0,1)}$ converges, as $s\uparrow 1$, in $L^p(\Omega)$ to some function $u\in W^{1,p}_0(\Omega)$ and this function $u$ is the unique solution to \eqref{local}.
\end{cor}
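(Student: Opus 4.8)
The plan is to reduce all three assertions to the variational structure of the two boundary value problems, and then to invoke Theorem \ref{minimos}. The starting point is the observation, already contained in \eqref{Lm}, that $\LL_{m,s}$ is the Fr\'echet derivative of $J_{m,s}$ and $\LL_\A$ is the Fr\'echet derivative of $J$. Consequently a function $u_s\in W^{s,p}_0(\Omega)$ is a weak solution of \eqref{nonlocal}, in the sense that $\langle \LL_{m,s}u_s,\varphi\rangle=\int_\Omega f\varphi\,dx$ for every $\varphi\in W^{s,p}_0(\Omega)$, if and only if $u_s$ is a critical point of
$$
\Phi_s(v):=J_{m,s}(v)-\int_\Omega fv\,dx,\qquad v\in W^{s,p}_0(\Omega),
$$
and likewise $u\in W^{1,p}_0(\Omega)$ solves \eqref{local} if and only if it is a critical point of $\Phi(v):=J(v)-\int_\Omega fv\,dx$ on $W^{1,p}_0(\Omega)$. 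The linear term here is well defined and continuous because $f\in L^{p'}(\Omega)$ and $W^{s,p}_0(\Omega)\hookrightarrow L^p(\Omega)$.

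I would then verify that, for these convex functionals, \emph{critical point} coincides with \emph{unique minimizer}. Convexity of $J_{m,s}$ is inherited from the convexity of $t\mapsto|t|^p$ precomposed with the linear map $v\mapsto v(x)-v(x-h)$ and then integrated against the nonnegative kernel $m(x,h)|h|^{-n-sp}$; by the lower bound in \eqref{H1} together with the fractional Poincar\'e inequality the functional $\Phi_s$ is coercive on $W^{s,p}_0(\Omega)$, so the direct method of the calculus of variations produces a minimizer, and for $p>1$ strict convexity of $t\mapsto|t|^p$ forces $u_s(x)-u_s(x-h)$ to be determined almost everywhere, whence the minimizer is unique. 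Since $\Phi_s$ is of class $C^1$ and convex, its minimizer is exactly its unique critical point, and writing $\Phi_s'(u_s)=0$ recovers the weak formulation of \eqref{nonlocal}. The identical argument applies to $\Phi$ and \eqref{local}, using that $\A(x,\cdot)$ is convex and satisfies $m_-|\xi|^p\lesssim\A(x,\xi)\lesssim m_+|\xi|^p$ by \eqref{coef} and \eqref{H1}. This yields the existence and uniqueness of $u_s$ for each $s$ and of $u$.

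It remains to pass to the limit $s\uparrow 1$, and here the work has already been done: Theorem \ref{gamma} gives the Gamma-convergence $J_{m,s}\to J$, and Theorem \ref{minimos} then asserts that the unique minimizer $u_s$ of $\Phi_s$ converges in $L^p(\Omega)$ to the unique minimizer $u$ of $\Phi$, with $u\in W^{1,p}_0(\Omega)$. Combining this with the identification of minimizers and weak solutions from the previous paragraph gives exactly the stated conclusion. The only genuine content beyond invoking Theorem \ref{minimos} is this variational identification; its sole delicate point is the restriction to $p>1$, where reflexivity and strict convexity are available to secure uniqueness, the case $p=1$ being understood in the weaker sense already built into Theorem \ref{minimos}. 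Everything else is routine.
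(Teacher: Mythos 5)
Your argument is correct and follows the same route the paper intends: the corollary is stated as a direct consequence of Theorem \ref{minimos}, and your only added content is the (standard, and correctly executed) identification of weak solutions of \eqref{nonlocal} and \eqref{local} with the unique minimizers of the associated convex, coercive functionals. The paper leaves that identification implicit, so your write-up simply makes explicit what the paper takes for granted.
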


\section{Remarks on homogenization}\label{S6}
The purpose of this section is to investigate the simultaneous effect that  {\em localization} (i.e. $s\uparrow 1$) and homogenization can have in some problems. 

To be precise, assume that now we have a family of kernels $m_\ve(x,h)$ satisfying \eqref{H1}--\eqref{H3}. Then, for each $\ve$ when we take $s\uparrow 1$, we obtain a limit function $\A_\ve(x,\xi)$ as defined in \eqref{coef}.

The results of the previous section tell us that the solutions $u_{s,\ve}$ of \eqref{nonlocal} with $m=m_\ve$ are converging as $s\uparrow 1$ to $u_\ve$, the solution to \eqref{local} with $\A=\A_\ve$.

The problem that we want to address is what happens when $\ve\downarrow 0$.

In order to understand this question, we focus on the model problem where $m_\ve(x,h)$ is obtain from a single kernel $m$ in the form
$$
m_\ve(x,h)= m(\tfrac{x}{\ve}, h),
$$
and $m(x,h)$ is a periodic function in $x$ of period 1 in each $x_i$, $i=1,\dots, n$.

To keep things even simpler, we start with the one dimensional problem.

\subsection{The one-dimensional case}
In the $1-$dimensional case explicit formulas describing the behavior of the limit problems are available. Indeed, consider a $1-$periodic function in $x$, $m(x,h)$ satisfying \eqref{H1}--\eqref{H3}. According to  Theorem \ref{teo.bbm} we have that
$$
\lim_{s\uparrow 1} (1-s) \iint_{\R\times \R} m(\tfrac{x}{\ve},h) \frac{|u(x)-u(x-h)|^p}{|h|^{1+sp}}\,dxdh =  \int_{\R^n} \A(\tfrac{x}{\ve}, |u'(x)|) \,dx =: J^\ve(u).
$$
In this one-dimensional case, the limit function $\A(y,\xi)$ can be easily computed as
$$
\A(y, \xi)=\frac1p \left(a(y,-1) +a(y,1)\right) |\xi|^p =: A(y) |\xi|^p.
$$
When $\ve$ vanishes, it is well known that (see \cite[Proposition 3.7]{FBPS}) $J^\ve$ Gamma-converges to $J^*$, where
$$
J^*(u) = \int_\R A^* |u'(x)|^p\, dx,
$$
and $A^*$ is a constant coefficient given by
$$
A^*:=\left(\int_0^1 A(t)^{-1/(p-1)}\, dt\right)^{-1/(p-1)}.
$$
This fact gives as a result that minimizers $u_{s,\ve}$ of \eqref{nonlocal} with coefficients $m_\ve$ verifiy that
\begin{align*}
\lim_{\ve\downarrow 0} \left( \lim_{s\uparrow 1} u_{s,\ve} \right) = u^*,
\end{align*}
where $u^*$ is the solution to
\begin{equation}\label{homog}
\begin{cases}
-(A^* |u'|^{p-2}u')' = f & \text{in }\Omega\\
u=0 & \text{on }\partial\Omega.
\end{cases}
\end{equation}

On the other hand, it is well-known (see, for instance \cite{FBRS}) that as $\ve \downarrow 0$ it holds that solutions $u_{s,\ve}$ of \eqref{nonlocal} with coefficients $m_\ve$ converge to the solution $\bar u_s$ of \eqref{nonlocal} with coefficient $\bar m(h)$ given by
$$
\bar m(h) =\int_0^1 m(t,h)\,dt.
$$

Finally, applying Theorem \ref{teo.bbm} we arrive at
\begin{align*}
\lim_{s\uparrow 1} \left(  \lim_{\ve\downarrow 0}  u_{s,\ve} \right) =   \bar u,
\end{align*}
where $\bar u$ is the solution to the problem
\begin{equation}\label{homog2}
\begin{cases}
-(\bar A |u'|^{p-2}u')' = f & \text{in }\Omega\\
u=0 & \text{on }\partial\Omega.
\end{cases}
\end{equation}
In this case, $\bar A$ is given by
$$
\bar A= \frac1p(\bar m (-1)+\bar m(1)) = \frac1p \int_0^1 m(t,-1)+m(t,1)\,dt.
$$

From these simple formulas one can immediately see that the localization process and the homogenization process are not interchangeables.

\subsection{The general case}
The computations of the previous subsection can be extended with some care to the $n-$dimensional case.

Let now $m(x,h)$ be a $Q-$periodic function in its first variable, $Q$ being the unit cube in $\R^n$, and  satisfying hypothesis \eqref{H1}--\eqref{H3}.

Let us now state the problem in a precise way.

A function $u_{s, \ve}\in W^{s,p}_0(\Omega)$ is a weak solution of 
$$
\begin{cases}
\LL_{m_\ve, s} u = f & \mbox{in }\Omega\\
u=0 & \mbox{in }\R^n\setminus\Omega,
\end{cases}
$$
if
$$
\langle \LL_{m_\ve, s} u_{s,\ve}, v\rangle = \int_\Omega fv\ dx,
$$
for every $v\in W^{s,p}_0(\Omega)$ where $f\in W^{-s,p'}(\Omega)$ and  $\LL_{m_\ve, s}$ is the Fr\'echet derivative of $J_{m_\ve, s}$ given in \eqref{Lm}. Observe that it follows that $\langle \LL_{m_\ve, s} u, v\rangle$ is given by
$$
(1-s)\iint_{\R^n\times\R^n} m(\tfrac{x}{\ve}, h) \frac{|u(x)-u(x-h)|^{p-2}(u(x)-u(x-h))(v(x)-v(x-h))}{|h|^{n+sp}}\, dhdx.
$$

Our first result concerns with the problem of first localizing and then homogenizing, that is, we first take the limit as $s\uparrow 1$ and then the limit as $\ve\downarrow 0$.

\begin{prop}\label{lim.ve.s}
It holds that
$$
\lim_{\ve\downarrow 0} \left( \lim_{s\uparrow 1} u_{s,\ve} \right)= u^*,
$$
(in the $L^p(\Omega)$ sense) where $u^*\in W^{1,p}_0(\Omega)$ is the weak solution of
$$
\begin{cases}
-\diver(\nabla_\xi\A^*(\nabla u^*)) = f & \text{in }\Omega\\
u^*=0 & \text{on }\partial\Omega,
\end{cases}
$$
and
$$
\A^*(\xi)= \inf_{v\in W^{1,1}_\text{per}(Q)} \int_Q \A(y, \xi+\nabla v(y))\, dy
$$
\end{prop}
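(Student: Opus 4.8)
The statement to prove is Proposition \ref{lim.ve.s}: for fixed $\varepsilon$, the nonlocal solutions $u_{s,\varepsilon}$ converge as $s\uparrow 1$ to the solution $u_\varepsilon$ of the local anisotropic problem with coefficient $\A_\varepsilon(x,\xi)=\A(\tfrac{x}{\varepsilon},\xi)$, and then, letting $\varepsilon\downarrow 0$, the functions $u_\varepsilon$ converge to $u^*$, the solution of the homogenized problem with integrand $\A^*$. The key point is that each of these two limits is already known: the first is exactly the content of Theorem \ref{minimos} (applied with $m=m_\varepsilon$, noting that $m_\varepsilon$ satisfies \eqref{H1}--\eqref{H3} with radial limit $a_\varepsilon(x,\omega)=a(\tfrac{x}{\varepsilon},\omega)$, hence produces the integrand $\A_\varepsilon$ of \eqref{coef}), and the second is the classical periodic homogenization result for integral functionals of the form $\int_\Omega \A(\tfrac{x}{\varepsilon},\nabla u)\,dx$. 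So the proof is essentially a matter of assembling these two ingredients and checking the hypotheses.

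\textbf{Step 1: the inner limit $s\uparrow 1$.} For each fixed $\varepsilon>0$, apply Theorem \ref{gamma} and Theorem \ref{minimos} with the kernel $m_\varepsilon(x,h)=m(\tfrac{x}{\varepsilon},h)$. Since $m$ obeys \eqref{H1}--\eqref{H3}, so does $m_\varepsilon$ (with the same constants $m_\pm$, and with radial limit $a(\tfrac{x}{\varepsilon},\omega)$), so $J_{m_\varepsilon,s}$ Gamma-converges as $s\uparrow 1$ to $J_\varepsilon(u)=\int_{\R^n}\A(\tfrac{x}{\varepsilon},\nabla u)\,dx$, and the unique minimizers $u_{s,\varepsilon}$ of $J_{m_\varepsilon,s}(v)-\int_\Omega fv\,dx$ over $W^{s,p}_0(\Omega)$ converge in $L^p(\Omega)$ as $s\uparrow 1$ to the unique minimizer $u_\varepsilon\in W^{1,p}_0(\Omega)$ of $J_\varepsilon(v)-\int_\Omega fv\,dx$; this $u_\varepsilon$ is the weak solution of $-\diver(\nabla_\xi\A(\tfrac{x}{\varepsilon},\nabla u_\varepsilon))=f$ in $\Omega$ with zero boundary data.

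\textbf{Step 2: the outer limit $\varepsilon\downarrow 0$.} It remains to pass to the limit $\varepsilon\downarrow 0$ in the family $\{u_\varepsilon\}$. Here I would invoke the standard theory of periodic homogenization of convex integral functionals (Marcellini, Braides--Defranceschi, or \cite{DalMaso}): the functionals $v\mapsto \int_\Omega\A(\tfrac{x}{\varepsilon},\nabla v)\,dx$ on $W^{1,p}_0(\Omega)$ Gamma-converge, with respect to the $L^p(\Omega)$ topology, as $\varepsilon\downarrow 0$ to the homogenized functional $v\mapsto\int_\Omega\A^*(\nabla v)\,dx$, where $\A^*$ is given by the cell formula $\A^*(\xi)=\inf_{w\in W^{1,1}_{\mathrm{per}}(Q)}\int_Q\A(y,\xi+\nabla w(y))\,dy$. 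One must check that the Carathéodory integrand $\A(y,\xi)$ satisfies the standard growth and coercivity bounds: from \eqref{coef} and \eqref{H1} one has $\tfrac{m_-}{p}\int_{\Sn}|\xi\cdot\omega|^p\,d\Hn\le\A(y,\xi)\le\tfrac{m_+}{p}\int_{\Sn}|\xi\cdot\omega|^p\,d\Hn$, and $\int_{\Sn}|\xi\cdot\omega|^p\,d\Hn=\mathcal{K}_{p,n}p\,|\xi|^p$ up to rotation-invariance, so $c_1|\xi|^p\le\A(y,\xi)\le c_2|\xi|^p$; convexity of $\xi\mapsto\A(y,\xi)$ is clear since it is an average of the convex functions $|\xi\cdot\omega|^p$; and $Q$-periodicity in $y$ is inherited from $m$. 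Adding the continuous (hence $\Gamma$-continuous) perturbation $-\int_\Omega fv\,dx$ and using that Gamma-convergence plus equicoercivity implies convergence of minimizers, we conclude $u_\varepsilon\to u^*$ in $L^p(\Omega)$, where $u^*$ is the unique minimizer of $\int_\Omega\A^*(\nabla v)\,dx-\int_\Omega fv\,dx$, i.e. the weak solution of $-\diver(\nabla_\xi\A^*(\nabla u^*))=f$ in $\Omega$, $u^*=0$ on $\partial\Omega$. Chaining the two limits gives $\lim_{\varepsilon\downarrow 0}(\lim_{s\uparrow 1}u_{s,\varepsilon})=u^*$, as claimed.

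\textbf{Main obstacle.} No single step is deep, since both limits are off-the-shelf; the real work is bookkeeping. The point requiring the most care is verifying that the abstract homogenization theorem genuinely applies to $\A$ as defined in \eqref{coef} — in particular that $\A(\cdot,\xi)$ is measurable and $Q$-periodic (clear from \eqref{H3}), that the two-sided $p$-growth is uniform in $y$ (which follows from \eqref{H1}, independently of any regularity of the radial limit $a$), and that one is entitled to the cell formula with the relaxed class $W^{1,1}_{\mathrm{per}}(Q)$ rather than $W^{1,p}_{\mathrm{per}}(Q)$; under $p$-growth these two infima coincide by a standard truncation/density argument, so the formula stated in the proposition is the correct one. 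A secondary subtlety is that the convergence of minimizers in Step 2 requires equicoercivity of the perturbed functionals $v\mapsto\int_\Omega\A(\tfrac{x}{\varepsilon},\nabla v)\,dx-\int_\Omega fv\,dx$ uniformly in $\varepsilon$, which again is immediate from the lower bound $\A(y,\xi)\ge c_1|\xi|^p$ together with Poincaré's inequality on the bounded domain $\Omega$ and Young's inequality to absorb the linear term.
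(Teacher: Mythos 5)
Your proposal is correct and follows essentially the same route as the paper: for fixed $\ve$ the limit $s\uparrow 1$ is handled by the results of Section \ref{S5} (Theorems \ref{gamma} and \ref{minimos} applied to $m_\ve$), and the limit $\ve\downarrow 0$ is then the classical periodic homogenization of the convex integral functionals $v\mapsto\int_\Omega \A(\tfrac{x}{\ve},\nabla v)\,dx$, which the paper disposes of by citing \cite[Chapter 24]{DalMaso}. Your additional verification of the growth, convexity and periodicity hypotheses for $\A$ is a welcome but routine elaboration of the same argument.
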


\begin{proof}
First, we have to take the limit as $s\uparrow 1$ for fixed $\ve>0$, but this was carried out in Section \ref{S5}, and it holds that
$$\lim_{s\uparrow 1} u_{s,\ve} = u_\ve,$$
where $u_\ve\in W^{1,p}_0(\Omega)$ is the solution to
$$
\begin{cases}
-\diver(\nabla_\xi \A_\ve(x,\nabla u_\ve))= f& \text{in }\Omega\\
u_\ve=0 & \text{on }\partial\Omega.
\end{cases}
$$
Now, we can apply the results of \cite[Chapter 24]{DalMaso} to conclude the desired result as $\ve\downarrow 0$. 
\end{proof}

To finish the section, we now deal with the case where first we homogenize and then localize, i.e. first take the limit $\ve\downarrow 0$ and then the limit $s\uparrow 1$.

\begin{prop}\label{lim.s.ve}
It holds that
$$
\lim_{s\uparrow 1} \left(  \lim_{\ve\downarrow 0}  u_{s,\ve} \right) = \bar u
$$
(in the $L^p(\Omega)$ sense), where $\bar u\in W^{1,p}_0(\Omega)$ is the solution to
$$
\begin{cases}
-\diver(\nabla_\xi \bar\A(\nabla \bar u))= f & \text{in }\Omega\\
\bar u = 0 & \text{on }\partial\Omega,
\end{cases}
$$
where $\bar \A$ is given by
$$
\bar \A(\xi) = \int_Q \A(y,\xi)\, dy,
$$
and $\A$ is the one given by \eqref{coef}.
\end{prop}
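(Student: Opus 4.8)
The plan is to reduce the statement to two steps that have already been (or will be) established elsewhere, taking the limits in the opposite order from Proposition \ref{lim.ve.s}. First I would fix $s\in(0,1)$ and let $\ve\downarrow 0$. For fixed $s$ the operator $\LL_{m_\ve,s}$ is the Fr\'echet derivative of $J_{m_\ve,s}$, where $m_\ve(x,h)=m(\tfrac{x}{\ve},h)$ oscillates rapidly only in the $x$-variable. Since the nonlocal kernel $|h|^{-(n+sp)}$ is unaffected by the scaling and $m$ is $Q$-periodic in the first variable with bounds \eqref{H1}, the standard nonlocal homogenization result (as in \cite{FBRS}) applies: $J_{m_\ve,s}$ $\Gamma$-converges in $L^p(\Omega)$, as $\ve\downarrow 0$, to $J_{\bar m_s,s}$ where $\bar m_s(h)=\int_Q m(y,h)\,dy$ — in the nonlocal setting the homogenized kernel is simply the average over the period cell, with no corrector needed, because the convolution-type structure in $h$ does not couple with the oscillations in $x$. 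Consequently $u_{s,\ve}\to \bar u_s$ in $L^p(\Omega)$ as $\ve\downarrow 0$, where $\bar u_s\in W^{s,p}_0(\Omega)$ is the unique weak solution of $\LL_{\bar m_s,s}\bar u_s=f$ in $\Omega$.

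Next I would let $s\uparrow 1$ in the family $\{\bar u_s\}$. For this I want to invoke Theorem \ref{teo.bbm} and Theorem \ref{minimos} applied to the kernel $\bar m_s$; the subtlety is that $\bar m_s$ itself depends on $s$, so I must check that the radial-limit hypothesis \eqref{H3} passes to the averaged kernel uniformly. This is where the main care is needed: writing $m(y,r\omega)=a(y,\omega)+O(r)$ uniformly in $\omega$ and integrating over $y\in Q$ gives $\bar m_s(r\omega)=\int_Q m(y,r\omega)\,dy=\bar a(\omega)+O(r)$ with $\bar a(\omega)=\int_Q a(y,\omega)\,dy$, and the $O(r)$ is uniform because the original one was and $Q$ has finite measure; moreover $\bar m_s$ actually does not depend on $s$, so in fact $\bar m_s=\bar m$ for all $s$. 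Then Theorem \ref{teo.bbm} (or rather its $\Gamma$-convergence consequence, Theorem \ref{gamma}, and Theorem \ref{minimos}) gives that $\bar u_s\to \bar u$ in $L^p(\Omega)$, where $\bar u\in W^{1,p}_0(\Omega)$ solves $-\diver(\nabla_\xi\bar\A(\nabla\bar u))=f$ with $\bar\A(\xi)=\frac1p\int_{\Sn}\bar a(\omega)|\xi\cdot\omega|^p\,d\Hn=\frac1p\int_{\Sn}\left(\int_Q a(y,\omega)\,dy\right)|\xi\cdot\omega|^p\,d\Hn=\int_Q \A(y,\xi)\,dy$, using Fubini and the definition \eqref{coef}. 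This identifies $\bar\A$ as claimed.

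Assembling the two steps: $\lim_{s\uparrow 1}\left(\lim_{\ve\downarrow 0}u_{s,\ve}\right)=\lim_{s\uparrow 1}\bar u_s=\bar u$, all limits being in $L^p(\Omega)$, and $\bar u$ is the unique weak solution of the stated local problem by Theorem \ref{minimos} applied with the convex integrand $\bar\A$. I expect the main obstacle to be the bookkeeping in the first step — making the nonlocal homogenization statement precise and verifying that the hypotheses of the cited homogenization result are met uniformly in $s$ (in particular that the ellipticity constants $m_-,m_+$ of $m_\ve$ are $\ve$-independent and that $\bar m_s$ inherits \eqref{H1}--\eqref{H3}) — whereas the interchange $\lim_{s\uparrow1}\bar u_s=\bar u$ is a direct application of the machinery already built in Sections \ref{S3} and \ref{S5}. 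A clean way to present this is to note that since $\bar m_s=\bar m$ is independent of $s$, the second step is literally Theorem \ref{minimos} for the fixed kernel $\bar m$, so no new convergence argument is required there.
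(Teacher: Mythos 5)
Your proposal is correct and follows essentially the same route as the paper's proof: first let $\ve\downarrow 0$ for fixed $s$ using the nonlocal homogenization result of \cite{FBRS}, which yields the averaged kernel $\bar m(h)=\int_Q m(y,h)\,dy$, and then let $s\uparrow 1$ via the machinery of Section \ref{S5}. The extra checks you carry out (that $\bar m$ inherits \eqref{H1}--\eqref{H3} with limit $\bar a(\omega)=\int_Q a(y,\omega)\,dy$, and the Fubini identification $\bar\A(\xi)=\int_Q\A(y,\xi)\,dy$) are precisely the details the paper leaves implicit, so no new ideas are needed beyond what you wrote.
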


\begin{proof}
We first have to take the limit as $\ve\downarrow 0$ for fixed $s\in (0,1)$. But this problem was already solved in \cite{FBRS} and what is known is that
$$
\lim_{\ve\to 0} u_{s,\ve} = \bar u_s,
$$
where $\bar u_s$ is the solution to
$$
\begin{cases}
\LL_{\bar m, s} \bar u_s = f & \text{in }\Omega\\
\bar u_s = 0&\text{in }\R^n\setminus\Omega,
\end{cases}
$$
and 
$$
\bar m(h) = \int_Q m(y,h)\, dy.
$$
Now, we can take the limit as $s\uparrow 1$ using the results of Section \ref{S5} to conclude the desired result.
\end{proof}

\bibliography{biblio}

\begin{thebibliography}{10}

\bibitem{AV}
Nicola Abatangelo and Enrico Valdinoci.
\newblock Getting acquainted with the fractional {L}aplacian.
\newblock In {\em Contemporary research in elliptic {PDE}s and related topics},
  volume~33 of {\em Springer INdAM Ser.}, pages 1--105. Springer, Cham, 2019.

\bibitem{ACPS2}
Angela Alberico, Andrea Cianchi, Lubo\v{s} Pick, and Lenka Slav\'{\i}kov\'{a}.
\newblock On the limit as {$s \to 1^-$} of possibly non-separable fractional
  {O}rlicz-{S}obolev spaces.
\newblock {\em Atti Accad. Naz. Lincei Rend. Lincei Mat. Appl.},
  31(4):879--899, 2020.

\bibitem{ACPS}
Angela Alberico, Andrea Cianchi, Lubo\v{s} Pick, and Lenka Slav\'{\i}kov\'{a}.
\newblock On the limit as {$s\to 0^+$} of fractional {O}rlicz-{S}obolev spaces.
\newblock {\em J. Fourier Anal. Appl.}, 26(6):Paper No. 80, 19, 2020.

\bibitem{Cianchi}
Angela Alberico, Andrea Cianchi, Lubo\v{s} Pick, and Lenka Slav\'{\i}kov\'{a}.
\newblock On the limit as {$s\to 0^+$} of fractional {O}rlicz-{S}obolev spaces.
\newblock {\em J. Fourier Anal. Appl.}, 26(6):Paper No. 80, 19, 2020.

\bibitem{ADM}
Luigi Ambrosio, Guido De~Philippis, and Luca Martinazzi.
\newblock Gamma-convergence of nonlocal perimeter functionals.
\newblock {\em Manuscripta Math.}, 134(3-4):377--403, 2011.

\bibitem{BBM}
Jean Bourgain, Haim Brezis, and Petru Mironescu.
\newblock Another look at {S}obolev spaces.
\newblock In {\em Optimal control and partial differential equations}, pages
  439--455. IOS, Amsterdam, 2001.

\bibitem{CMSV}
M.~Capolli, A.~Maione, A.~M. Salort, and E.~Vecchi.
\newblock Asymptotic behaviours in fractional {O}rlicz-{S}obolev spaces on
  {C}arnot groups.
\newblock {\em J. Geom. Anal.}, 31(3):3196--3229, 2021.

\bibitem{DalMaso}
Gianni Dal~Maso.
\newblock {\em An introduction to {$\Gamma$}-convergence}, volume~8 of {\em
  Progress in Nonlinear Differential Equations and their Applications}.
\newblock Birkh\"{a}user Boston, Inc., Boston, MA, 1993.

\bibitem{D}
J.~D\'{a}vila.
\newblock On an open question about functions of bounded variation.
\newblock {\em Calc. Var. Partial Differential Equations}, 15(4):519--527,
  2002.

\bibitem{DPV}
Eleonora Di~Nezza, Giampiero Palatucci, and Enrico Valdinoci.
\newblock Hitchhiker's guide to the fractional {S}obolev spaces.
\newblock {\em Bull. Sci. Math.}, 136(5):521--573, 2012.

\bibitem{DFPV}
Serena Dipierro, Alessio Figalli, Giampiero Palatucci, and Enrico Valdinoci.
\newblock Asymptotics of the {$s$}-perimeter as {$s\searrow0$}.
\newblock {\em Discrete Contin. Dyn. Syst.}, 33(7):2777--2790, 2013.

\bibitem{DD}
Irene Drelichman and Ricardo~G. Dur\'{a}n.
\newblock The {B}ourgain-{B}r\'{e}zis-{M}ironescu formula in arbitrary bounded
  domains.
\newblock {\em Proc. Amer. Math. Soc.}, 150(2):701--708, 2022.

\bibitem{FBPS}
Juli\'{a}n Fern\'{a}ndez~Bonder, Juan~P. Pinasco, and Ariel~M. Salort.
\newblock Quasilinear eigenvalues.
\newblock {\em Rev. Un. Mat. Argentina}, 56(1):1--25, 2015.

\bibitem{FBRS}
Juli\'{a}n Fern\'{a}ndez~Bonder, Antonella Ritorto, and Ariel~Martin Salort.
\newblock {$H$}-convergence result for nonlocal elliptic-type problems via
  {T}artar's method.
\newblock {\em SIAM J. Math. Anal.}, 49(4):2387--2408, 2017.

\bibitem{FBS}
Julian Fern\'{a}ndez~Bonder and Ariel Salort.
\newblock Stability of solutions for nonlocal problems.
\newblock {\em Nonlinear Anal.}, 200:112080, 13, 2020.

\bibitem{FBS-JFA}
Juli\'{a}n Fern\'{a}ndez~Bonder and Ariel~M. Salort.
\newblock Fractional order {O}rlicz-{S}obolev spaces.
\newblock {\em J. Funct. Anal.}, 277(2):333--367, 2019.

\bibitem{FBSmag}
Juli\'{a}n Fern\'{a}ndez~Bonder and Ariel~M. Salort.
\newblock Magnetic fractional order {O}rlicz-{S}obolev spaces.
\newblock {\em Studia Math.}, 259(1):1--24, 2021.

\bibitem{LS}
Giovanni Leoni and Daniel Spector.
\newblock Corrigendum to ``{C}haracterization of {S}obolev and {$BV$} spaces''
  [{J}. {F}unct. {A}nal. 261 (10) (2011) 2926--2958].
\newblock {\em J. Funct. Anal.}, 266(2):1106--1114, 2014.

\bibitem{MSV}
Alberto Maione, Ariel~M. Salort, and Eugenio Vecchi.
\newblock Maz'ya-{S}haposhnikova formula in magnetic fractional
  {O}rlicz-{S}obolev spaces.
\newblock {\em Asymptot. Anal.}, 126(3-4):201--214, 2022.

\bibitem{MS}
V.~Maz'ya and T.~Shaposhnikova.
\newblock On the {B}ourgain, {B}rezis, and {M}ironescu theorem concerning
  limiting embeddings of fractional {S}obolev spaces.
\newblock {\em J. Funct. Anal.}, 195(2):230--238, 2002.

\bibitem{NPSV}
Hoai-Minh Nguyen, Andrea Pinamonti, Marco Squassina, and Eugenio Vecchi.
\newblock New characterizations of magnetic {S}obolev spaces.
\newblock {\em Adv. Nonlinear Anal.}, 7(2):227--245, 2018.

\bibitem{PSV}
Andrea Pinamonti, Marco Squassina, and Eugenio Vecchi.
\newblock The {M}az'ya-{S}haposhnikova limit in the magnetic setting.
\newblock {\em J. Math. Anal. Appl.}, 449(2):1152--1159, 2017.

\bibitem{P}
Augusto~C. Ponce.
\newblock A new approach to {S}obolev spaces and connections to
  {$\Gamma$}-convergence.
\newblock {\em Calc. Var. Partial Differential Equations}, 19(3):229--255,
  2004.

\end{thebibliography}
\bibliographystyle{plain}

\end{document}